\documentclass[reqno,11pt]{amsart}

\usepackage{amsmath,amssymb,amsthm,mathrsfs}
\usepackage{epsfig,color}
\usepackage[latin1]{inputenc}

\usepackage[numbers]{natbib}


\voffset=-1.5cm \textheight=23cm \hoffset=-.5cm \textwidth=16cm
\oddsidemargin=1cm \evensidemargin=-.1cm
\footskip=35pt
\parindent=20pt

\numberwithin{equation}{section}

\def\A{\mathcal A}

\def\H{\mathcal H}

\def\MM{\mathbf M}
\def\Le{\mathcal L}
\def\R{\mathbb R}
\def\N{\mathbb N}
\def\Z{\mathbb Z}
\newcommand{\dist}{\mathop{\mathrm{dist}}}
\def\e{\varepsilon}
\def\s{\sigma}
\def\S{\Sigma}
\def\vphi{\varphi}

\def\om{\omega}

\def\g{\gamma}

\def\Om{\Omega}
\def\de{\delta}
\def\Id{{\rm Id}}

\def\spt{{\rm spt}}

\def\pa{\partial}

\def\00{{\bf 0}}

\def\cl{{\rm cl}\,}


\def\F{\mathcal{F}}

\def\W{\mathcal{W}}
\def\P{\mathcal{P}}

\def\CC{\mathcal{C}}

\renewcommand{\a}{\alpha}

\renewcommand{\om}{\omega}

\newcommand{\cc}{\subset\subset}

\def\Lip{{\rm Lip}\,}

\newcommand\res{\mathop{\hbox{\vrule height 7pt width .3pt depth 0pt \vrule height .3pt width 5pt depth 0pt}}\nolimits}

\def\weak{\stackrel{*}{\rightharpoonup}}


\def\MM{\mathbf{M}}

\def\Q{\mathcal{Q}}

\newtheorem*{theorem*}{Theorem}
\newtheorem{theorem}{Theorem}
\newtheorem{lemma}[theorem]{Lemma}

\newtheorem{remark}[theorem]{Remark}

\newtheorem{definition}[theorem]{Definition}

\setcounter{tocdepth}{1}
\title{A direct approach to Plateau's problem}

\author{C. De Lellis}
\address{Institut f\"ur Mathematik, Universitaet Z\"urich, Winterthurerstrasse 190, CH-8057 Z\"urich, Switzerland}
\email{camillo.delellis@math.uzh.ch}

\author{F. Ghiraldin}
\address{Institut f\"ur Mathematik, Universitaet Z\"urich, Winterthurerstrasse 190, CH-8057 Z\"urich, Switzerland}
\email{francesco.ghiraldin@math.uzh.ch}

\author{F. Maggi}
\address{Department of Mathematics, The University of Texas at Austin,  2515 Speedway Stop C1200, Austin, Texas 78712-1202, USA}
\email{maggi@math.utexas.edu}

\begin{document}

\begin{abstract} We provide a compactness principle which is applicable
to different formulations of Plateau's problem in codimension one and which
is exclusively based
on the theory of Radon measures and elementary comparison arguments.
Exploiting some additional techniques in geometric
measure theory, we can use this principle to give a different proof of a
theorem by Harrison and Pugh and to answer a question raised by Guy
David.
\end{abstract}

\maketitle

\section{Introduction}

Since the pioneering work of Reifenberg there has been an ongoing interest into formulations of Plateau's problem involving the minimization of the Hausdorff measure on closed sets coupled with some notion of ``spanning a given boundary''. More precisely consider any closed set $H\subset \mathbb R^{n+1}$ and assume to have a class $\mathcal{P} (H)$ of relatively closed subsets $K$ of $\mathbb R^{n+1}\setminus H$, which encodes a particular notion of ``$K$ bounds $H$''. Correspondingly there is a formulation of Plateau's problem, namely the minimum for such problem is
\begin{equation}
  \label{plateau problem generale}
m_0 :=  \inf \{\H^n (K) : K\in \mathcal{P}(H)\}\,,
\end{equation}
and a minimizing sequence $\{K_j\} \subset \mathcal{P} (H)$ is characterized by the property $\mathcal{H}^n (K_j) \to m_0$.
Two good motivations for considering this kind of approach rather than the one based on integer rectifiable currents are that, first, not every interesting boundary can be realized as an integer rectifiable cycle and, second, area minimizing $2$-d currents in $\mathbb R^3$ are always smooth away from their boundaries, in contrast to what one observes with real world soap films.

There are substantial difficulties related to the minimization of Hausdorff measures on classes of closed (or even compact) sets. Depending on the convergence adopted, these are either related to lack of lower semicontinuity or to compactness issues. In both cases, obtaining existence results in this framework is a quite delicate task, as exemplified in various works by Reifenberg \cite{reifenberg1,reifenberg2,reifenberg3}, De Pauw \cite{depauwPlateau}, Feuvrier \cite{Feuvrier2009}, Harrison and Pugh \cite{Harrison2011,Harrison2014,HarrisonPugh14}, Fang \cite{Fang2013} and  David \cite{davidplateau}.

Our goal here is to show that in some interesting cases these difficulties can be avoided by exploiting Preiss' rectifiability theorem for Radon measures \cite{Preiss,DeLellisNOTES} in combination with the sharp isoperimetric inequality on the sphere and with standard variational arguments, noticeably elementary comparisons with spheres and cones. A precise formulation of our main result is the following:

\begin{definition}[Cone and cup competitors]\label{def good class}
Let $H\subset \mathbb R^{n+1}$ be closed. Given $K\subset \mathbb R^{n+1}\setminus H$ and $B_{x,r}=\{y\in\R^n:|x-y|<r\}\subset \mathbb R^{n+1}\setminus H$, the cone competitor for $K$ in $B_{x,r}$ is the set
\begin{equation}
  \label{cone comp}
\big(K\setminus B_{x,r}\big) \cup \big\{\lambda x+(1-\lambda)z:z\in K\cap\pa B_{x,r}\,,\lambda\in[0,1]\big\}\,;
\end{equation}
a cup competitor for $K$ in $B_{x,r}$ is any set of the form
\begin{equation}
  \label{cup comp}
  \big(K\setminus B_{x,r}\big) \cup \big(\partial B_{x,r}\setminus A\big)\,,
\end{equation}
where $A$ is a connected component of $\partial B_{x,r} \setminus K$.

Given a family $\mathcal{P} (H)$ of relatively closed subsets $K\subset \mathbb R^{n+1}\setminus H$, we say that an element $K\in \mathcal{P} (H)$ has the {\em good comparison property} in $B_{x,r}$ if
\begin{equation}
  \label{inf good class}
  \inf \big\{ \H^n (J) : J\in \mathcal{P} (H)\,,J\setminus \cl (B_{x,r}) =K\setminus \cl
(B_{x,r})\big\} \leq \H^n (L)\,
\end{equation}
whenever $L$ is the cone competitor or any cup competitor for $K$ in $B_{x,r}$.
The family $\mathcal{P} (H)$ is a {\em good class} if, for any $K\in \mathcal{P} (H)$ and for every $x\in K$, the set $K$ has the good comparison property in $B_{x,r}$ for a.e. $r\in (0, \dist (x, H))$.
\end{definition}

\begin{theorem}\label{thm generale}
Let $H\subset \mathbb R^{n+1}$ be closed and $\mathcal{P} (H)$ be a good class. Assume the infimum in Plateau's problem \eqref{plateau problem generale} is finite and let $\{K_j\}\subset \mathcal{P}(H)$ be a minimizing sequence of countably $\mathcal{H}^n$-rectifiable sets. Then, up to subsequences, the measures $\mu_j := \H^n \res K_j$ converge weakly$^\star$ in $\mathbb R^{n+1}\setminus H$ to a measure $\mu = \theta \H^n \res K$, where $K = \spt\, \mu \setminus H$ is a countably $\H^n$-rectifiable set and $\theta \geq 1$. In particular, $\liminf_j\H^n(K_j)\ge \H^n(K)$.

Furthermore, for every $x\in K$ the quantity $r^{-n} \mu (B_{x,r})$ is monotone increasing and
\begin{equation}\label{e:DLB}
\theta (x) = \lim_{r\downarrow 0} \frac{\mu (B_\rho (x))}{\omega_n \rho^n} \geq 1\, ,
\end{equation}
where $\omega_n$ is the Lebesgue measure of the unit ball in $\mathbb R^n$.
\end{theorem}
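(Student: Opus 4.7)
The plan is to use Radon measure compactness to extract a weak$^\star$ limit $\mu$ of the measures $\mu_j$, derive from the good comparison property two differential inequalities for $f_j(r):=\mu_j(\bar B_{x,r})$ — a monotonicity-type one from the cone competitor and an isoperimetric-type one from the cup — and transfer these to $\mu$ to obtain the monotonicity of $r^{-n}\mu(B_{x,r})$, the density bound $\theta\ge 1$, and, via Preiss' theorem, rectifiability.

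Since $\H^n(K_j)\to m_0<\infty$, the total masses $\mu_j(\mathbb R^{n+1})$ are bounded, so after a subsequence $\mu_j\rightharpoonup^\star\mu$ on $\mathbb R^{n+1}\setminus H$; set $K:=\spt\mu\setminus H$ and $\delta_j:=\H^n(K_j)-m_0\to 0$. Fix $x\in K_j$ and $r<\dist(x,H)$ at which the good comparison property holds, and let $L$ be the cone competitor in $B_{x,r}$. For every $\varepsilon>0$, \eqref{inf good class} yields $J\in\mathcal P(H)$ with $J\setminus\bar B_{x,r}=K_j\setminus\bar B_{x,r}$ and $\H^n(J)\le\H^n(L)+\varepsilon$; since $\H^n(J)\ge m_0$, subtracting $\H^n(K_j\setminus\bar B_{x,r})$ from both sides and letting $\varepsilon\to 0$ gives
$$\mu_j(\bar B_{x,r})\le\H^n(L\cap\bar B_{x,r})+\delta_j=\tfrac{r}{n}\H^{n-1}(K_j\cap\partial B_{x,r})+\delta_j.$$
The analogous argument with a cup competitor built from the largest connected component $A$ of $\partial B_{x,r}\setminus K_j$ produces
$$\mu_j(\bar B_{x,r})\le\H^n(\partial B_{x,r})-\H^n(A)+\delta_j.$$

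Countable rectifiability of $K_j$ and the coarea formula give $f_j'(r)\ge\H^{n-1}(K_j\cap\partial B_{x,r})$ for a.e. $r$. Inserting this in the cone inequality yields $(r^{-n}f_j)'\ge-n r^{-n-1}\delta_j$, so $r\mapsto r^{-n}f_j(r)$ is monotone up to a vanishing error. For $x\in K$, since $\mu(B_{x,1/k})>0$, a standard diagonal argument produces $x_j\in K_j$ with $x_j\to x$; applying the above at $(x_j,r)$ and using $\mu_j(\bar B_{x_j,r})\to\mu(B_{x,r})$ for a.e. $r$ with $\mu(\partial B_{x,r})=0$ shows that $r\mapsto r^{-n}\mu(B_{x,r})$ is monotone increasing on $(0,\dist(x,H))$; in particular $\theta(x):=\lim_{r\downarrow 0}\mu(B_{x,r})/(\omega_n r^n)$ exists and is finite. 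For the bound $\theta(x)\ge 1$, the sharp spherical isoperimetric inequality — applied to $\partial B_{x,r}\setminus A$, which is at most a hemisphere and has boundary in $K_j\cap\partial B_{x,r}$ — gives, for $r$ small,
$$\H^n(\partial B_{x,r}\setminus A)\le(n\omega_n^{1/n})^{-n/(n-1)}\H^{n-1}(K_j\cap\partial B_{x,r})^{n/(n-1)}(1+o(1)).$$
Combining with the cup inequality and $f_j'\ge\H^{n-1}(K_j\cap\partial B_{x,r})$ produces the ODE
$$\frac{d}{dr}(f_j(r)-\delta_j)^{1/n}\ge\omega_n^{1/n}(1-o(1))\qquad\text{while }f_j>\delta_j.$$
Integrating from the first $r_0=r_0(j)$ at which $f_j\ge\delta_j$ (with $r_0\to 0$ as $j\to\infty$) yields $f_j(r)\ge\omega_n r^n(1-o(1))$; transferring to the limit via $x_j\to x$ gives $\mu(B_{x,r})\ge\omega_n r^n$ for small $r$, hence $\theta(x)\ge 1$.

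With $1\le\theta(x)<\infty$ for $\mu$-a.e. $x$, Preiss' theorem \cite{Preiss,DeLellisNOTES} implies $\mu$ is $n$-rectifiable, so $\mu=\theta\,\H^n\res K$ with $K$ countably $\H^n$-rectifiable. Weak$^\star$ lower semicontinuity and $\theta\ge 1$ then give $\H^n(K)\le\mu(\mathbb R^{n+1}\setminus H)\le\liminf_j\H^n(K_j)$, completing the argument. The delicate step is the density lower bound: linking the cup competitor to the sharp spherical isoperimetric inequality requires careful bookkeeping of the $\delta_j$-error (forcing the ODE to be integrated away from $f_j=0$) and of the $r\to 0$ discrepancy between spherical and Euclidean isoperimetric constants.
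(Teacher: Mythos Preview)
Your argument has a genuine gap in the derivation of the sharp lower bound $\theta\ge 1$. The inequality you invoke,
\[
\H^n(\partial B_{x,r}\setminus A)\le\bigl(n\,\omega_n^{1/n}\bigr)^{-n/(n-1)}\H^{n-1}(K_j\cap\partial B_{x,r})^{n/(n-1)}(1+o(1)),
\]
is not available. First, the claim that $\partial B_{x,r}\setminus A$ occupies at most a hemisphere is unjustified: $A$ being the \emph{largest} connected component of $\partial B_{x,r}\setminus K_j$ does not force $\H^n(A)\ge\H^n(\partial B_{x,r})/2$ when there are three or more components. Second, and more seriously, even for sets occupying at most a hemisphere the spherical isoperimetric constant is \emph{strictly smaller} than the Euclidean one $n\,\omega_n^{1/n}$: for a geodesic cap of angular radius $\alpha$ on $\partial B_{x,r}$ the ratio $\mathrm{perimeter}/\mathrm{area}^{(n-1)/n}$ decreases monotonically from $n\,\omega_n^{1/n}$ at $\alpha=0$ down to $\sigma_{n-1}/(\sigma_n/2)^{(n-1)/n}$ at $\alpha=\pi/2$. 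This profile is scale-invariant in $r$, so your ``$(1+o(1))$ as $r\to 0$'' cannot repair it. Your ODE therefore only holds with a dimensional constant $c(n)<\omega_n^{1/n}$, and integrating it yields merely $\theta\ge\theta_0(n)$ for some $\theta_0<1$.

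The paper obtains $\theta\ge 1$ by a two-stage argument that you are collapsing into one. Step one uses the cup competitor together with the \emph{non-sharp} relative isoperimetric inequality on the sphere to get $\mu(B_{x,r})\ge\theta_0\,\omega_n r^n$; step two uses the cone competitor for monotonicity. These together give $0<\theta(x)<\infty$ $\mu$-a.e., and Preiss' theorem then yields rectifiability of $\mu$. Only \emph{after} rectifiability is established does the sharp bound follow: at a point $x$ where $K$ has an approximate tangent plane $T$, the preliminary density bound forces $K\cap B_{x,r}$ (and hence, for large $j$, $K_j\cap B_{x,r}$) into a slab of width $\varepsilon r$ about $x+T$. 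One then shows that the two large spherical caps on either side of the slab must lie in \emph{different} connected components of $\partial B_{x,r}\setminus K_j$; otherwise the cup competitor built on their common component would give $\mu(B_{x,r})\le C\varepsilon r^n$, contradicting the bound $\theta_0>0$. With both caps present and separated, the near-equality case of the spherical isoperimetric inequality forces $\H^{n-1}(K_j\cap\partial B_{x,r})\ge(\sigma_{n-1}-\eta)r^{n-1}$, and integrating $f'(r)\ge(\sigma_{n-1}-\eta)r^{n-1}$ gives $\theta(x)\ge(\sigma_{n-1}-\eta)/(n\omega_n)\to 1$. The tangent-plane input is essential; without it the cup/isoperimetric argument alone cannot produce the sharp constant.
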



Our point is that although Theorem \ref{thm generale} does not imply in general the existence of a minimizer in $\mathcal{P} (H)$, this might be achieved with little additional work (but possibly using some heavier machinery from geometric measure theory) in some interesting cases. We will give here two applications. The first one is motivated by a very elegant idea of Harrison, which can be explained as follows. Assume that $H$ is a smooth closed compact $n-1$-dimensional submanifold of $\mathbb R^{n+1}$: then we say that a relatively closed set $K\subset \mathbb R^{n+1}\setminus H$ bounds $H$ if $K$ intersects every smooth curve $\gamma$ whose linking number with $H$ is $1$. A possible formulation of Plateau's problem is then to minimize the Hausdorff measure in this class of sets. Building upon her previous work on differential chains, see \cite{HarrisonOperator12}, in \cite{Harrison2011} Harrison gives a general existence result for a suitable weak version of this problem. In the subsequent work \cite{HarrisonPugh14}, Harrison and Pugh prove that the corresponding minimizer
yields a closed set $K$ which is a minimizer in the original formulation of the problem, and to which the regularity theory for $(\MM,\xi,\de)$-minimal sets by Almgren and Taylor \cite{Almgren76,taylor76} can be applied. In particular, $K$ is analytic out of a $\H^n$-negligible singular set, and, actually, in the physical case $n=3$ and away from the boundary set $H$, this singular set obeys the experimental observations known as Plateau's laws. Boundary regularity seems a major issue to be settled.

We can recover the theorem of Harrison and Pugh in a relatively short way from Theorem \ref{thm generale}. In fact our approach allows one to work, with the same effort, in a more general setting.

\begin{definition}\label{def plateau first}
Let $n\ge 2$ and $H$ be a closed set in $\R^{n+1}$. When $H$ is a closed compact $n-1$-dimensional submanifold, following \cite{HarrisonPugh14} we say that a closed set $K\subset \mathbb R^{n+1}\setminus H$ spans $H$ if it intersects any smooth embedded closed curve $\gamma$ in $\mathbb R^{n+1}\setminus H$ such that the linking number of $H$ and $\gamma$ is $1$.

More in general, for an arbitrary closed $H$ let us consider the family
\[
\CC_H=\big\{\g:S^1\to\R^{n+1}\setminus H:\mbox{$\g$ is a smooth embedding of $S^1$ into $\R^{n+1}$}\big\}\,.
\]
We say that $\CC\subset\CC_H$ is closed by homotopy (with respect to $H$) if $\CC$ contains all elements
$\gamma'\in \CC_H$ belonging to the same homotopy class $[\g] \in\pi_1(\R^{n+1}\setminus H)$ of any $\gamma \in \CC$. Given $\CC\subset\CC_H$ closed by homotopy, we say that a relatively closed subset $K$ of $\R^{n+1}\setminus H$ is a $\CC$-spanning set of $H$ if
\begin{eqnarray}\label{Cfilling}
\mbox{$K\cap\g\ne\emptyset$ for every $\g\in\CC$}\,.
\end{eqnarray}
We denote by $\mathcal{F} (H, \CC)$ the  family of $\CC$-spanning sets of $H$.
\end{definition}

\begin{theorem}\label{thm plateau}
Let $n\ge 2$, $H$ be closed in $\R^{n+1}$ and $\CC$ be closed by homotopy with respect to $H$. Assume the infimum of the Plateau's problem corresponding to $\P(H)=\F(H,\CC)$ is finite. Then:
\begin{itemize}
\item[(a)]  $\F (H,\CC)$ is a good class in the sense of Definition \ref{def good class}.
\item[(b)] There is a minimizing sequence $\{K_j\}\subset \F (H, \CC)$ which consists of $\H^n$-rectifiable sets. If $K$ is any set associated to $\{K_j\}$ by Theorem \ref{thm generale}, then $K\in\F (H, \CC)$ and thus $K$ is a minimizer.
\item[(c)] The set $K$ in (b) is an $(\MM , 0, \infty)$-minimal set in $\R^{n+1}\setminus H$ in the sense of Almgren.
\end{itemize}
\end{theorem}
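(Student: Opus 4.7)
The plan is to address the three parts in order, with each part building on the previous ones.

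\textbf{For (a)}: I argue that for $K\in\F(H,\CC)$ and $B=B_{x,r}\cc\R^{n+1}\setminus H$, the cone competitor and every cup competitor for $K$ in $B$ themselves belong to $\F(H,\CC)$, which directly yields \eqref{inf good class} with $J=L$. If, toward contradiction, some $\gamma\in\CC$ avoids the competitor $L$, I construct a homotopic $\gamma'\in\CC$ avoiding $K$. In the cup case $L=(K\setminus B)\cup(\pa B\setminus A)$, all crossings of $\pa B$ by $\gamma$ are confined to the single connected component $A\subset\pa B\setminus K$; I replace each arc of $\gamma\cap B$ by a path in $A$. In the cone case, after perturbing $\gamma$ away from the apex $x$, the radial retraction from $x$ sends the cone to $K\cap\pa B$; since $\gamma$ misses the cone, its radial shadow on $\pa B$ lies in $\pa B\setminus K$, so both endpoints of each arc of $\gamma\cap B$ lie in the same connected component of $\pa B\setminus K$ and can be joined by a path therein. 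Since $\CC$ is closed under homotopy, the resulting $\gamma'$ is in $\CC$, producing the contradiction.

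\textbf{For (b)}: First, for any $K\in\F(H,\CC)$ with $\H^n(K)<\infty$ and any $\e>0$, I construct a rectifiable $\tilde K\in\F(H,\CC)$ with $\H^n(\tilde K)\leq\H^n(K)+\e$ via a polyhedral deformation argument (a variant of the Federer-Fleming projection, chosen so as to preserve $\CC$-spanning). This yields a rectifiable minimizing sequence $\{K_j\}$ to which Theorem \ref{thm generale} applies, producing the candidate limit $K$. To verify $K\in\F(H,\CC)$, I argue by contradiction: if some $\gamma\in\CC$ satisfies $\gamma\cap K=\emptyset$, then a tubular neighborhood $T\cong D^n\times S^1$ of $\gamma$ in $\R^{n+1}\setminus H$ with $\overline T\cap K=\emptyset$ has $\mu(T)=0$, so $\H^n(K_j\cap T)\to 0$ by weak$^\star$ convergence. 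The area formula applied to the projection $\pi:T\to D^n$ gives
\[
\int_{D^n}\#(K_j\cap\pi^{-1}(v))\,dv\leq C\,\H^n(K_j\cap T)\to 0,
\]
so for large $j$ some fiber $\gamma_{v_j}=\pi^{-1}(v_j)$ is disjoint from $K_j$; since $\gamma_{v_j}$ is homotopic to $\gamma$ in $T$, it lies in $\CC$ by homotopy closure, contradicting $K_j\in\F(H,\CC)$. Hence $K\in\F(H,\CC)$ and $K$ is a minimizer by Theorem \ref{thm generale}.

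\textbf{For (c)}: Given any Lipschitz deformation $\phi$ with support $W\cc\R^{n+1}\setminus H$, I consider the competitor $K^\phi:=(K\setminus W)\cup\phi(K\cap W)=\phi(K)$. Minimality of $K$ combined with the claim $K^\phi\in\F(H,\CC)$ gives $\H^n(K)\leq\H^n(K^\phi)$, which rearranges to the $(\MM,0,\infty)$-minimal inequality $\H^n(K\cap W)\leq\H^n(\phi(K\cap W))$. The claim reduces to the following: if some $\gamma\in\CC$ satisfies $\gamma\cap K^\phi=\emptyset$, equivalently $\phi^{-1}(\gamma)\cap K=\emptyset$, then the closed set $\phi^{-1}(\gamma)$ (which contains $\gamma\setminus W$ verbatim) must carry a closed curve $\gamma^*$ homotopic to $\gamma$ in $\R^{n+1}\setminus H$; such a $\gamma^*$ then lies in $\CC$ by homotopy closure and violates $K\in\F(H,\CC)$. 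The main obstacle I anticipate is precisely this last step of extracting a homotopic closed curve from $\phi^{-1}(\gamma)$ when $\phi$ is non-injective, which I would handle by combining the surjectivity of $\pi_1(\R^{n+1}\setminus H\setminus W)\to\pi_1(\R^{n+1}\setminus H)$ (valid since $n+1\geq 3$ and $W$ is compact) with closure of $\CC$ under homotopy and a slicing argument of the same flavor as that of part (b).
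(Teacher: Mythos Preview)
Your treatment of (a) and of the second half of (b) (showing the limit $K$ is $\CC$-spanning via the tubular-neighborhood slicing argument) is correct and matches the paper. The difficulties lie in the first half of (b) and in (c).

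\textbf{Constructing a rectifiable minimizing sequence.} The assertion that a Federer--Fleming type projection yields $\tilde K\in\F(H,\CC)$ with $\H^n(\tilde K)\le\H^n(K)+\e$ is the weak point. The classical deformation only gives $\H^n(\tilde K)\le C(n)\,\H^n(K)$, which is useless for producing a \emph{minimizing} rectifiable sequence; you would need to explain what ``variant'' achieves the $(1+\e)$-bound while preserving the spanning condition, and this is far from routine. The paper does something genuinely different (and this is precisely the point flagged in Remark~\ref{r:Harr} as requiring the theory of Caccioppoli sets): it takes a Whitney decomposition of $\R^{n+1}\setminus H$ into dyadic cubes $Q$ with $\H^n(K\cap\partial Q)=0$, and inside each cube replaces $K$ by the support of a \emph{minimal Caccioppoli partition} separating the connected components of $Q'\setminus K$ (with $Q'$ the concentric doubled cube). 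This replacement is rectifiable, satisfies $\H^n(K_Q\cap\cl(Q))\le\H^n(K\cap\cl(Q))$ by minimality, and---via Lemma~\ref{lemma curve intersezione} applied to cubes---still separates the relevant pairs of boundary points, so the patched set $K'$ lies in $\F(H,\CC)$ with $\H^n(K')\le\H^n(K)$.

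\textbf{$(\MM,0,\infty)$-minimality.} You correctly identify the crux: from $\gamma\cap\phi(K)=\emptyset$ one needs $\gamma'\in\CC$ with $\gamma'\cap K=\emptyset$. But the proposed fix via surjectivity of $\pi_1(\R^{n+1}\setminus(H\cup W))\to\pi_1(\R^{n+1}\setminus H)$ does not do the job: it lets you homotope $\gamma$ off $W$, but gives no reason the homotoped curve avoids $K$ outside $W$. Nor does the slicing idea from (b) apply, since here there is no sequence with vanishing mass in a tube. The paper's argument is local and degree-theoretic. Take $B_\rho$ with $B_r\subset\subset B_\rho\subset\subset\R^{n+1}\setminus H$; since $\gamma\cap(K\setminus B_\rho)=\gamma\cap(\phi(K)\setminus B_\rho)=\emptyset$, Lemma~\ref{lemma curve intersezione} (applied to $K$) produces an arc of $\gamma$ in $\cl(B_\rho)$ with endpoints $p,q$ in distinct components $A\ne A'$ of $\cl(B_\rho)\setminus K$. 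The claim is that $p,q$ also lie in distinct components of $B_\rho\setminus\phi(K)$ (forcing the arc to meet $\phi(K)$). If not, let $\Omega$ be their common component; since $\partial A\subset K\cup\partial B_\rho$ and $\phi(K)\cap\Omega=\emptyset$, the degree $\deg(\phi|_{\cl(A)},\cdot)$ is defined and constant on $\Omega$. Near $p$ one has $\phi={\rm Id}$, so this degree equals $1$; hence $\phi^{-1}(q')\cap A\ne\emptyset$ for $q'\in\Omega$ near $q$, and since $\phi={\rm Id}$ near $q$ as well, $q'\in A$---contradicting $q\in\cl(A')$ with $A\ne A'$.
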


\begin{remark}\label{r:Harr}
{\rm As already mentioned the variational problem considered in \cite{Harrison2011,HarrisonPugh14} corresponds to the case where $H$ is a closed compact $(n-1)$-dimensional submanifold of $\R^{n+1}$ and $\CC =\{\gamma\in \CC_H: \mbox{ the linking number of $H$ and $\gamma$ is $1$}\}$. In fact there is yet a small technical difference: in \cite{Harrison2011,HarrisonPugh14} the authors minimize the Hausdorff spherical measure, which coincides with the Hausdorff measure $\mathcal{H}^n$ on rectifiable sets, but it is in general larger on unrectifiable sets.
After completing this note we learned that Harrison and Pugh have been able to improve their proof in order to minimize as well the Hausdorff measure, \cite{Personal}. Finally, we stress that, while points (a) and (c) can be concluded from Theorem \ref{thm generale} using elementary results about Radon measures and isoperimetry, point (b) relies in a substantial way upon the theory of Caccioppoli sets and minimal partitions.}
\end{remark}

We next exploit Theorem \ref{thm generale} in a second context proving an existence result for the ``sliding minimizers''  introduced by David, see \cite{davidplateau,DavidBeginners}.

\begin{definition}
  Let $H\subset \R^{n+1}$ be closed and $K_0 \subset \R^{n+1}\setminus H$ be relatively closed. We denote by $\S(H)$ the family of Lipschitz maps $\vphi:\R^{n+1}\to\R^{n+1}$ such that there exists a continuous map $\Phi:[0,1]\times\R^{n+1}\to\R^{n+1}$ with $\Phi(1,\cdot)=\vphi$, $\Phi(0,\cdot)=\Id$ and $\Phi(t,H)\subset H$ for every $t\in[0,1]$. We then define
  \[
  \A(H,K_0)=\big\{K:\mbox{$K=\vphi(K_0)$ for some $\vphi\in\S(H)$}\big\}\,
  \]
and say that $K_0$ is a sliding minimizer if $\H^n(K_0)=\inf\{\H^n(J):J\in\A(H,K_0)\}$.
\end{definition}

We will use the convention that, whenever $E\subset \mathbb R^{n+1}$ and $\delta>0$, $U_\delta (E)$ denotes the $\delta$-neighborhood of $E$.

\begin{theorem}\label{thm david}
$\A(H,K_0)$ is a good class in the sense of Definition \ref{def good class}.
Moreover, assume that
\begin{itemize}
\item[(i)] $K_0$ is bounded and countably $\H^n$-rectifiable with $\H^n(K_0)<\infty$;
\item[(ii)] $\H^n(H)=0$ and for every $\eta>0$ there exist $\de>0$ and $\pi\in\S(H)$ such that
  \begin{equation}
    \label{retraction}
      \Lip\pi\le1+\eta\,,\qquad \pi(U_\de(H))\subset H\,.
  \end{equation}
\end{itemize}
Then, given any minimizing sequence $\{K_j\}$ (in the Plateau's problem corresponding to $\P(H)=\A(H,K_0)$) and any set $K$ as in Theorem \ref{thm generale}, we have
  \begin{equation}
    \label{allardo}
     \inf \big\{\H^n (J) : J\in \A (H, K_0)\big\} =\H^n(K)=\inf\big\{\H^n(J):J\in\A(H,K)\big\}\,.
  \end{equation}
In particular $K$ is a sliding minimizer.
\end{theorem}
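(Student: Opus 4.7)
The plan is to prove the three parts of Theorem \ref{thm david} in sequence: that $\A(H,K_0)$ is a good class, the first equality $m_0=\H^n(K)$, and the second equality $\H^n(K)=\inf\{\H^n(J):J\in \A(H,K)\}$. Theorem \ref{thm generale} supplies the compactness and density machinery, while assumption (ii) is the technical ingredient that converts the weak-$\ast$ limit $K$ into a near sliding deformation of $K_0$.

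For part (A), I fix $K=\vphi_0(K_0)\in \A(H,K_0)$ and a ball $B_{x,r}\subset\R^{n+1}\setminus H$. Any Lipschitz map $\psi:\R^{n+1}\to\R^{n+1}$ equal to the identity outside $B_{x,r}$ lies in $\S(H)$, since the straight-line homotopy from $\Id$ to $\psi$ is constant on $H$; concatenating this with the homotopy defining $\vphi_0$ shows $\psi\circ\vphi_0\in\S(H)$, and hence $\psi(K)=(\psi\circ\vphi_0)(K_0)\in \A(H,K_0)$ agrees with $K$ outside $\cl B_{x,r}$. I would then produce, by explicit Lipschitz constructions inside $B_{x,r}$, maps $\psi_\varepsilon$ whose image of $K\cap\cl B_{x,r}$ approximates in measure the cone competitor (via a radial deformation collapsing $B_{x,\varepsilon}$ to $\{x\}$ and interpolating radially across the annulus) or any prescribed cup competitor (via a stereographic-type Lipschitz projection from an interior point of the chosen component $A\subset\partial B_{x,r}\setminus K$). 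Letting $\varepsilon\downarrow 0$ gives \eqref{inf good class}.

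With (A) in hand, $K_0\in\A(H,K_0)$ forces $m_0\leq\H^n(K_0)<\infty$, and every element of $\A(H,K_0)$ is $\H^n$-rectifiable as a Lipschitz image of the rectifiable set $K_0$; Theorem \ref{thm generale} then yields $\mu=\theta\H^n\res K$ with $\theta\geq 1$ on $K$. Weak-$\ast$ lower semicontinuity immediately gives the easy bound
$$\H^n(K)\leq \int_K\theta\,d\H^n=\mu(\R^{n+1}\setminus H)\leq m_0.$$
For the reverse inequality, the idea is to use assumption (ii) to build, for each $\eta>0$, a sliding deformation of $K_0$ whose image has $\H^n$-measure at most $\H^n(K)+O(\eta)$. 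The monotonicity of $r^{-n}\mu(B_{x,r})$ together with the density bound $\theta\geq 1$ forces $K_j\subset U_{\delta_j}(K\cup H)$ for some $\delta_j\downarrow 0$, so one can cover $K$ by finitely many small balls where $K$ is close to its approximate tangent planes, project $K_j$ onto these planes inside each such ball by a Lipschitz map with constant close to one, and apply the $(1+\eta)$-Lipschitz retraction $\pi$ from (ii) on the $\delta$-neighborhood of $H$, using $\H^n(H)=0$ to absorb the portion of $K_j$ swallowed by $H$. Letting $\eta\downarrow 0$ gives $m_0\leq\H^n(K)$, which forces equality and $\theta\equiv 1$ $\H^n$-a.e.\ on $K$.

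The second equality is obtained by applying the same mechanism to the sequence $\psi(K_j)$ for an arbitrary $\psi\in\S(H)$: since $\psi\circ\vphi_j\in\S(H)$ we have $\psi(K_j)\in\A(H,K_0)$ and $\H^n(\psi(K_j))\geq m_0$; extracting a weak-$\ast$ limit of $\H^n\res\psi(K_j)$, noting that its support sits in $\psi(K)\cup H$ modulo the retraction $\pi$, and repeating the approximate-deformation estimate of the preceding paragraph, one concludes $\H^n(\psi(K))\geq m_0=\H^n(K)$. The principal obstacle is the construction in the third paragraph: patching the local projections on a cover of $K$ together with the retraction $\pi$ near $H$ into a single Lipschitz map of $\R^{n+1}$ with globally controlled Lipschitz constant and whose image has $\H^n$-measure within $O(\eta)$ of $\H^n(K)$. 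Hypothesis \eqref{retraction} is precisely designed to make this step quantitative.
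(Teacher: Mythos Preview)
Your outline has two genuine gaps. For the first equality, the claim that $K_j\subset U_{\delta_j}(K\cup H)$ is unjustified: density and monotonicity for the \emph{limit} measure $\mu$ give no control on the location of $K_j$ away from $K$, and there is no uniform density lower bound for the $K_j$ themselves. The paper avoids this by reversing your order of steps: it first proves $\theta\leq 1$ via a purely \emph{local} comparison (project $K_j$ onto the approximate tangent plane of $K$ inside a \emph{single} ball $B_{x,r}$ and obtain a contradiction with almost-minimality if $\theta(x)=1+\sigma>1$), and only afterwards, knowing $\mu=\H^n\res K$, combines weak-$\ast$ upper semicontinuity on compact sets with the retraction $\pi$ near $H$ and a radial contraction outside a large ball to get $\limsup_j\H^n(K_j)\leq\H^n(K)$. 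No global patching of projections is needed for the first equality. A smaller but real issue in part (A): for cup competitors the radial projection from a point $x_0\in A$ onto $\partial B$ lands in $\partial B$ minus a small cap around $x_0$, whose $\H^n$-measure can vastly exceed $\H^n(\partial B\setminus A)$; the paper constructs a further Lipschitz map $\phi_2:\partial B\to\partial B$, built from a Whitney decomposition of $A$, that collapses most of $A$ while fixing a neighborhood of $K\cap\partial B$, and this is where the actual work lies.

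The second equality is where your sketch departs most from the paper and cannot be completed as written. Taking weak limits of $\H^n\res\psi(K_j)$ does not yield $\H^n\res\psi(K)$ when $\psi$ is merely Lipschitz (non-injectivity introduces multiplicity), and the claim that the limiting support sits in $\psi(K)\cup H$ has exactly the same defect as above. The paper proceeds in two stages: first, the local-projection argument (combined with the area formula, which needs injectivity) shows $\H^n(K)\leq\H^n(\phi(K))$ for every \emph{diffeomorphism} $\phi\in\S(H)$, so the density-one varifold associated to $K$ is stationary in $\R^{n+1}\setminus H$; second, Allard's regularity theorem gives $K=\Gamma\cup S$ with $\Gamma$ a real-analytic hypersurface and $\H^n(S)=0$, and one can now use the nearest-point projection onto $\Gamma$ itself (rather than onto affine tangent planes) to build maps $\varphi_j\in\S(H)$ with $\H^n(\varphi_j(K_0)\setminus K)\to 0$, whence $\H^n(\phi(K))\geq\liminf_j\H^n(\phi\circ\varphi_j(K_0))\geq m_0=\H^n(K)$ for all Lipschitz $\phi\in\S(H)$. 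Allard's theorem is not a technicality you can bypass: without regularity of $K$ there is no Lipschitz retraction of a neighborhood of $K$ onto $K$, and the passage from diffeomorphisms to arbitrary Lipschitz $\phi$ collapses.
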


The proof of the second equality in \eqref{allardo} borrows important ideas from the work of DePauw and Hardt, see \cite{depauwhardt} and it uses in a substantial way the theory of varifolds, in particular Allard's regularity theorem. A different approach to the existence of a $K$ satisfying the left hand side of \eqref{allardo} has been suggested by David in Section 7 of \cite{davidplateau}, where he also raised the question whether one could conclude the equality on the right hand side. Our Theorem gives therefore a positive answer to this question (see below for a stronger one raised also by David).

\begin{remark}\label{r:2}
  {\rm It seems very hard to conclude something about the existence of a minimizer in the {\it original} class $\A(H,K_0)$ from our approach, without a deeper analysis of what sliding deformations can do to the starting set $K_0$. The following example illustrates this difficulty. Let $H$ be the union of two far away parallel circles and $K_0$ be a cylinder joining them, namely define,
  for $R$ large,
  \begin{align*}
  H &= \{(x_1, x_2, x_3)\in \mathbb R^3: x_1^2 + x_2^2 = 1, |x_3| = R\}\\
  K_0 &= \{(x_1, x_2, x_3)\in \mathbb R^3: x_1^2 + x_2^2 = 1, |x_3| < R\}\,.
  \end{align*}
  Let $\{K_j\}\subset \A (H, K_0)$ be a minimizing sequence and $\mu_j = \H^2 \res K_j$. We obviously expect that $\H^2\res K_j \to \H^2 \res K$ where
  \[
  K =\{(x_1, x_2, x_3): x_1^2 + x_2^2 < 1, |x_3| = R\}\,.
  \]
Of course $K\not\in\A(H,K_0)$, but we can easily build a map $\varphi \in \S (H)$ which ``squeezes'' $K_0$ onto the set $K_1
= K \cup \{(0,0,t):|t|\le R\}$, i.e. the top and bottom disks connected by a vertical segment. $K_1$ is then a minimizer in $\A(H,K_0)$. On the other hand $K = \spt (\H^2\res K_1)$ and thus a purely measure-theoretic approach does not seem to capture this phenomenon. It is however very tempting to conjecture that, upon adding a suitable $\mathcal{H}^n$-negligible set (and possibly some more requirements on the boundary $H$), any
set $K$ as in Theorem \ref{thm david} is an element of $\mathcal{A} (H, K_0)$; cf. \cite{davidplateau}. We refer the reader to \cite{BW} for a result which has a similar flavour.}
\end{remark}

%

\noindent {\bf Acknowledgement}: CDL has been supported by ERC 306247 {\it Regularity of area-minimizing currents} and by
SNF 146349 {\it Calculus of variations and fluid dynamics}. FG has been supported by SNF 146349. FM has been supported by the NSF Grant DMS-1265910 {\it Stability, regularity, and symmetry issues in geometric variational problems}, and by a Simons visiting professorship of the Mathematisches Forschungsinstitut Oberwolfach. The authors thank Guy David and Guido De Philippis for many interesting comments and discussions.

\section{Proof of Theorem \ref{thm generale}}

We start with following classical fact. We include a quick proof just for the reader's convenience using sets of finite perimeter; the latter are however not really necessary, in particular it should be possible to prove Theorem \ref{thm generale} without leaving the framework provided by the theory of Radon measures.
In what follows we use the notation $\s_k=\H^{k}(\{z\in\R^{k+1}:|z|=1\})$ and $\omega_{k+1} = \H^{k+1} (\{z\in \R^{k+1} : |z|\leq 1\}) = \frac{\sigma_k}{k+1}$.

\begin{lemma}[Isoperimetry on the sphere]\label{lemma iso sfera}
  If $J\subset\pa B_{x,r}$ is compact and $\{A_h\}_{h=0}^\infty$ is the family of the connected components of $\pa B_{x,r}\setminus J$, ordered so that $\H^n(A_h)\ge\H^n(A_{h+1})$, then
\begin{equation}\label{iso sfera}
  \H^n(\pa B_{x,r}\setminus A_0)\le C(n)\,\H^{n-1}(J)^{n/n-1}\,.
\end{equation}
Moreover, for every $\eta>0$ there exists $\de>0$ such that
\begin{equation}\label{iso ottimale}
\min\big\{\H^n(A_0),\H^n(A_1)\big\}\ge\left(\textstyle{\frac{\s_n}{2}}-\de\right)\,r^n\qquad\Rightarrow\qquad \H^{n-1}(J)\ge(\s_{n-1}-\eta)\,r^{n-1}\, .
\end{equation}

The inequality \eqref{iso sfera} holds also if we replace $\partial B_{x,r}$ with $\partial Q$ for any cube $Q\subset \mathbb R^{n+1}$ or with any spherical cap $\partial B_{x,r} \cap \{y: (y-x) \cdot \nu > \varepsilon r\}$, where $\nu \in S^n$ and $\varepsilon \in ]0,1[$.
\end{lemma}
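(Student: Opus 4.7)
The plan is to derive the three assertions from the classical isoperimetric inequality on the unit sphere $\SS^n$. By rescaling $y\mapsto (y-x)/r$ I may assume $r=1$, so $\pa B_{x,r}$ is identified with $\SS^n$ (total measure $\s_n$) and $J\subset\SS^n$ is compact; the claims for general $r$ follow by homogeneity.

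For \eqref{iso sfera} my first observation is that $\H^n(A_h)\le\s_n/2$ for every $h\ge1$: since $A_0$ is the largest component and $A_0\cap A_h=\emptyset$, the inequality $\H^n(A_h)>\s_n/2$ would force $\H^n(A_0)+\H^n(A_h)>\s_n$. The relative isoperimetric inequality on $\SS^n$ therefore gives
\[
\H^n(A_h)\le C(n)\,\H^{n-1}(\pa A_h)^{n/(n-1)}\,,
\]
with $\pa A_h$ taken inside $\SS^n$. Because $A_h$ is a connected component of $\SS^n\setminus J$, one has $\pa A_h\subset J$, and since each point of $J$ lies on the boundary of at most two components, $\sum_{h\ge1}\H^{n-1}(\pa A_h)\le 2\H^{n-1}(J)$. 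Summing the componentwise estimates and invoking the elementary $\sum t_h^{n/(n-1)}\le(\sum t_h)^{n/(n-1)}$ for $t_h\ge0$ (a consequence of superadditivity of $s\mapsto s^{n/(n-1)}$) yields \eqref{iso sfera}.

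For \eqref{iso ottimale} I would use the \emph{sharp} form of the inequality: the isoperimetric profile $I(v):=\inf\{\H^{n-1}(\pa E):E\subset\SS^n,\ \H^n(E)=v\}$ is continuous on $[0,\s_n]$ with $I(\s_n/2)=\s_{n-1}$, hemispheres being the minimizers. Given $\eta>0$ pick $\de>0$ such that $I(v)\ge\s_{n-1}-\eta$ for every $v\in[\s_n/2-\de,\s_n/2]$. Since two disjoint subsets of $\SS^n$ cannot both have measure $>\s_n/2$, under the hypothesis of \eqref{iso ottimale} the smaller of $A_0,A_1$, call it $A$, satisfies $\H^n(A)\in[\s_n/2-\de,\s_n/2]$; then $\H^{n-1}(J)\ge\H^{n-1}(\pa A)\ge I(\H^n(A))\ge\s_{n-1}-\eta$, which after undoing the rescaling is the stated inequality. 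The extension to boundaries of cubes and to spherical caps $\pa B_{x,r}\cap\{(y-x)\cdot\nu>\e r\}$ is handled by observing that each such surface admits a bi-Lipschitz parametrization by (a portion of) $\SS^n$ with constants depending only on $n$ (and on $\e$ in the cap case); the above argument transports and only the dimensional constant is affected.

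The main technical obstacle is making the perimeter bookkeeping rigorous when $J$ is merely compact and hence $A_h$ may be wild. This is precisely where the ``sets of finite perimeter'' framework alluded to in the lemma's preamble enters: one regards each $A_h$ as a set of finite perimeter in $\SS^n$, replaces $\pa A_h$ by the reduced boundary $\pa^*A_h$, and verifies both $\pa^*A_h\subset J$ and the doubling count $\sum_h\H^{n-1}(\pa^*A_h)\le 2\H^{n-1}(J)$ modulo $\H^{n-1}$-null sets. Once this is in place, steps 2 and 3 become straightforward consequences of the classical (and sharp) isoperimetric inequality on the sphere.
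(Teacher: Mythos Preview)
Your proof of \eqref{iso sfera} is essentially the paper's: pass to reduced boundaries of the $A_h$ (sets of finite perimeter since $\H^{n-1}(J)<\infty$), use $\partial^*A_h\subset J$ and the doubling bound $\sum_h\H^{n-1}\res\partial^*A_h\le 2\H^{n-1}\res J$, apply the relative isoperimetric inequality for $h\ge1$, and sum via superadditivity of $t\mapsto t^{n/(n-1)}$. One caution: your intermediate claim that ``each point of $J$ lies on the boundary of at most two components'' is false for topological boundaries (many components can accumulate at a single point), but you correctly replace this in your final paragraph by the reduced-boundary version, which is exactly what the paper does.

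For \eqref{iso ottimale} you take a genuinely different route. The paper argues by contradiction and compactness of Caccioppoli sets: assuming failure for some $\eta$, it extracts limits $A_0^\infty,A_1^\infty$ of measure exactly $\sigma_n r^n/2$ each, disjoint, with common reduced boundary of $\H^{n-1}$-measure at most $(\sigma_{n-1}-\eta)r^{n-1}$, contradicting the sharp spherical isoperimetric inequality. You instead appeal directly to the continuity of the isoperimetric profile $I$ at $v=\sigma_n/2$ and apply it to the smaller of $A_0,A_1$. Your argument is shorter and avoids the compactness step, at the price of quoting continuity of the profile (which is itself usually proved by a compactness argument); both are valid.

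For the extension to $\partial Q$ and spherical caps the paper simply says the proof ``can be easily adapted''; your bi-Lipschitz transport is one way to do this. Note that for caps your constant depends on $\e$, whereas the statement writes $C(n)$; in the paper's only use of the cap case (Step three of Theorem~\ref{thm generale}) $\e$ is fixed, so this discrepancy is harmless, but if you want a constant uniform in $\e$ you should invoke the relative isoperimetric inequality directly on the (geodesically convex) cap rather than transporting from $\SS^n$.
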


\begin{proof}
  [Proof of Lemma \ref{lemma iso sfera}] We first prove \eqref{iso sfera} with $J\subset \partial B_{x,r}$. The proof can be easily adapted to boundary of cubes and spherical caps.
Since $\partial A_h \subset J$ and (without loss of generality) $\H^{n-1}(J)<\infty$ we know that \cite[Prop. 3.62]{AFP} each $A_h$ has finite perimeter
 and $\partial^*A_h \subset J$ (where $\partial^* A_h$ denotes the reduced boundary). By the properties of the reduced boundary one easily infers that
 $\sum_h \H^{n-1}\res \partial^*A_h \leq 2\H^{n-1}\res J$.
 By the relative isoperimetric inequality on $\pa B_{x,r}$, if $A\subset \pa B_{x,r}$ is of finite perimeter, then
 \begin{equation}\label{eq:isopsfera}
 \min\Big\{ \H^n( A)  ,\H^n(\pa B_{x,r}\setminus A) \Big\}  \le C(n)\,\H^{n-1}(\partial^*A)^{n/n-1}\,.
 \end{equation}
By the ordering property of the $\H^n(A_h)$, we thus find
\[
 \H^n( A_h) \le C(n)\,\big[\H^{n-1}(\partial^*A_h)\big]^{n/n-1}\,,\qquad\forall h\ge 1\,.
\]
Adding up over $h\geq 1$, the superadditivity of the function $t\mapsto t^\frac{n}{n-1}$ yields
\[
\H^n( \pa B_{x,r}\setminus A_0) \le C(n)\bigg(\sum_{h\ge 1}\H^{n-1}(\partial^*A_h)\bigg)^{n/n-1}\leq C(n)\,\H^{n-1}(J)^{n/n-1}\,.
\]
\eqref{iso ottimale} can be proved via a compactness argument: assuming that it fails for a given $\eta>0$, we find a sequence $J_k$ of sets, each violating the statement for $\delta = \frac{1}{k}$. Letting $A^k_0$ and $A^k_1$ be the corresponding connected components, we can use the compactness of Cacciopoli sets to conclude that they are converging to two sets $A_0^\infty, A_1^\infty$ with
\begin{eqnarray}\label{pino1}
\H^n(A_0^\infty)=\H^n(A_1^\infty)=\frac{\s_n}2\,r^n\,,\qquad \H^n(A_0^\infty\cap A_1^\infty)=0,
\\\label{pino2}
\max\Big\{\H^{n-1}(\pa^* A_0^\infty ),\H^{n-1}(\pa^* A_1^\infty )\Big\}\le(\s_{n-1}-\eta)\,r^{n-1}\, ,
\end{eqnarray}
By \eqref{pino1}, $\pa^* A_0^\infty = \pa^* A_1^\infty$; but then \eqref{pino2}  contradicts the sharp isoperimetric inequality on the sphere \cite[Theorem 10.2.1]{buragozalgaller}.
\end{proof}

\begin{proof}
  [Proof of Theorem \ref{thm generale}] Up to extracting subsequences we can assume the existence of a Radon measure $\mu$ on $\R^{n+1}\setminus H$ such that
\begin{equation}\label{muj va a mu}
  \mu_j\weak\mu\,,\qquad\mbox{as Radon measures on $\R^{n+1}\setminus H$}\,,
\end{equation}
where $\mu_j=\H^n \res K_j$. We set $K = \spt\, \mu \setminus H$ and divide the argument in four steps.

\medskip

\noindent {\it Step one}: We show the existence of $\theta_0=\theta_0(n)>0$ such that
\begin{equation}
  \label{lower density estimate mu}
  \mu(B_{x,r})\ge\theta_0\,\omega_n r^n\,,\qquad\forall x\in\spt\,\mu\,,\,\forall r<d_x :=\dist(x,H) \, .
\end{equation}
By \cite[Theorem 6.9]{mattila}, \eqref{lower density estimate mu} implies
\begin{eqnarray}
  \label{lower bound on mu}
  \mu\ge{\theta_0}\,\H^n\res K\,,\qquad\mbox{on subsets of $\R^{n+1}\setminus H$}\,.
\end{eqnarray}
We now prove \eqref{lower density estimate mu}.
Let $f(r)=\mu(B_{x,r})$ and $f_j(r)=\H^n(K_j\cap B_{x,r})$, so that
\[
f_j(r)-f_j(s)\ge\int_s^r\,\H^{n-1}(K_j\cap\pa B_{x,t})\,dt\,,\qquad 0<s<r<d_x\,,
\]
by the coarea formula \cite[3.2.22]{FedererBOOK}. Since $f_j$ is increasing on $(0,d_x)$, one has,
\[
Df_j\ge f_j'\,\Le^1\,,\qquad\mbox{with $f_j'(r)\ge \H^{n-1}(K_j\cap\pa B_{x,r})$ for a.e. $r\in(0,d_x)$}\,
\]
(here $Df_j$ denotes the distributional derivative of $f_j$, $f_j'$ the pointwise derivative and $\Le^1$ the Lebesgue measure).
By Fatou's lemma, if we set $g(t)=\liminf_{j}f_j'(t)$, then
\[
f (r) - f(s) = \mu(B_{x,r}\setminus B_{x,s})\ge\int_s^r\,g(t)\,dt\,,\qquad\mbox{provided $\mu(\pa B_{x,r})=\mu(\pa B_{x,s})=0$}\,.
\]
This shows that $Df \geq g \Le^1$. On the other hand, using the differentiability a.e. of $f$ and letting $s\uparrow r$, we also conclude
$f'\geq g$ $\Le^1$-a.e., whereas $Df \geq f' \Le^1$ is a simple consequence of the fact that $f$ is an increasing function.

Let $A_j$ denote a connected component of $\pa B_{x,r}\setminus K_j$ of maximal $\H^n$-measure, and let $K''_j$ be the corresponding cup competitor of $K_j$ in $B_{x,r}$, see \eqref{cup comp}. Since $\P(H)$ is a good class, for a.e. $r<d_x$ by \eqref{iso sfera} we find
\begin{equation}\label{eq:stimacomparison}
f_j(r)\le \H^n(\pa B_{x,r}\setminus A_j)+\e_j\le C(n)\,\Big(\H^{n-1}(\pa B_{x,r}\cap K_j)\Big)^{n/(n-1)}+\e_j\,,
\end{equation}
where $\e_j\to 0$ takes into account the almost minimality of $K_j$, namely we assume $\H^n (K_j) \leq \inf \{\H^n (K): K\in \mathcal{P} (H)\} + \e_j$.
Letting $j\to\infty$ we find that
\[
f(r)\le C(n)\,g(r)^{n/(n-1)}\le C(n)\,f'(r)^{n/(n-1)}\,,\qquad\mbox{ for a.e. $r<d_x$}\,,
\]
from which
\[
 f(r)^{(n-1)/n}\le C(n)\,f'(r)\,,\qquad\mbox{ for a.e. $r<d_x$}\,,
\]
which implies
\[
 1\le C(n) \big(f(r)^{1/n}\big)'\,,\qquad\forall r<d_x\,.
\]
Since the distributional derivative $Df^{1/n}$ is nonnegative, we deduce $r\le C(n)( f(r)^{1/n}-f(0)^{1/n})$, hence $\mu(B_{x,r})\ge \theta_0 \omega_n r^n$ for a suitable value of $\theta_0$.

\medskip

\noindent {\it Step two}: We fix $x\in \spt\, \mu \setminus H$, and prove that
\begin{equation}\label{monotonicity mu}
 r\mapsto \frac{f(r)}{r^n}=\frac{\mu(B_{x,r})}{r^n}\quad\mbox{is increasing on $(0,d_x)$.}
\end{equation}
This property can be deduced by using the cone competitor in $B_{x,r}$ in place of the cup competitor: estimate \eqref{eq:stimacomparison} becomes now
\[
 f_j(r)\le \H^n(K'_j\cap B_{x,r}) +\e_j= \frac rn \H^{n-1}(K_j\cap\pa B_{x,r}) +\e_j\le  \frac rn f_j'(r)+\e_j\,,
 \]
yielding $f(r)\le \frac rn g(r)\le \frac rn f'(r)$ for a.e. $r<d_x$. Again the positivity of the measure $D\log(f)$ implies the claimed monotonicity formula. By \eqref{lower bound on mu} and \eqref{monotonicity mu} the $n$-dimensional density of the measure  $\mu$, namely:
\[
\theta(x)=\lim_{r\to 0^+}\frac{f(r)}{\omega_n r^n}\ge{\theta_0}\,.
\]
exists, is finite and positive $\mu$-almost everywhere. By the well known theorem of Preiss, cf. \cite[Theorem 1.1]{DeLellisNOTES}, this property implies that $\mu = \theta \H^n \res \tilde{K}$ for some countably $\H^n$-rectifiable set $K$ and some positive Borel function $\theta$.
Since $K$ is the support of $\mu$, $\H^n (\tilde{K}\setminus K) =0$. On the other hand $\H^n (K\setminus \tilde{K}) =0$ by \eqref{lower bound on mu} and thus $K$ must be rectifiiable and $\mu = \theta \H^n\res K$. 

\medskip

\noindent {\it Step three}: We prove that $\theta(x)\ge 1$ for every $x\in K$ such that the approximate tangent space
to $K$ exists (thus, $\H^n$-a.e. on $K$). Fix any such $x\in K\setminus H$ and suppose, up to rotating the coordinates, that $T= \{x_{n+1}=0\}$ is the approximate tangent space to $K$ at $x$: in particular (cf. \cite[Corollary 4.4]{DeLellisNOTES}),
\[
 \H^n\res \frac{K-x}{r} \rightharpoonup^* \H^n\res T\,,\qquad\mbox{as $r\to 0^+$}\,.
\]
By the density lower bound \eqref{lower density estimate mu}, for every $\e>0$ there is $\rho>0$ such that
\begin{equation}\label{e:uniforme}
 K\cap B_{x,r}  \subset x+\{|y_{n+1}|< \e\,r\} \qquad \forall r<\rho\, .
\end{equation}
Indeed, assume $r$ is sufficiently small so that $\mu (B_{x,2r}\setminus (x+\{|y_{n+1}|< \frac{\e}{2}\,r\}))< \theta_0 2^{-n} \e^n r^n$. Then $K\cap (x+\{|y_{n+1}|< \frac{\e}{2}\,r\}) \cap B_{x,r}$ must be empty, since the existence of a point belonging to that set would imply
\[
\mu (B_{x,2r}\setminus (x+\{|y_{n+1}|< \textstyle{\frac{\e}{2}}\,r\})) \geq \mu (B_{y, \e r/2})\geq \frac{\theta_0 \e^n r^n}{2^n}\, .
\]
Setting $c(\e)=\e/\sqrt{1-\e^2}$, \eqref{e:uniforme} can be equivalently stated as
\begin{equation}\label{e:uniforme2}
 K\cap B_{x,\rho}\subset x+\{(y',y_{n+1}):|y_{n+1}|<c(\e)\,|y'|\}\,.
\end{equation}
If require in addition that $\H^n(K\cap\pa B_{x,\rho})=0$, then by the coarea formula \cite[3.2.22]{FedererBOOK}
\begin{eqnarray*}
  0&=&\lim_{j\to\infty}\mu_j\big(\cl(B_{x,\rho})\cap \big(x+\{(y',y_{n+1}):|y_{n+1}|<c(\e)\,|y'|\}\big)\big)
  \\
  &\ge&\int_0^\rho\,\H^{n-1}\big(K_j\cap\pa B_{x,r}\cap \big(x+\{(y',y_{n+1}):|y_{n+1}|<c(\e)\,|y'|\}\big)\,dr\, .
\end{eqnarray*}
So, if $\pa B^+_{x,r,\e} :=\{y\in \pa B_{x,r}:y_{n+1} >x_{n+1}+\e r\}$ and $\pa B^-_{x,r,\e} :=\{y\in \pa B_{x,r}:y_{n+1}<x_{n+1}-\e r\}$,
\begin{equation}
  \label{rhorho}
  \liminf_{j\to\infty}\H^{n-1}(K_j\cap\pa B_{x,r,\e}^\pm)=0\,,\qquad\mbox{for a.e. $r<\rho$}\,.
\end{equation}
Let us fix $r<\rho$ such that \eqref{rhorho} holds, $f'(r)$ exists, $f'(r)\ge g(r)$, and each $K_j$ has the good comparison property in $B_{x,r}$ (all these
conditions can be ensured for a.e. $r$). Using Lemma \ref{lemma iso sfera}, namely the relative isoperimetric inequality in the spherical cap $\pa B_{x,r,\e}^+$, one finds that if $A_j^+$ denotes the connected component of $\pa B_{x,r,\e}^+$ with largest $\H^n$-measure, then $\H^n(\pa B_{x,r,\e}^+\setminus A_j^+)\le C(n)\,\H^{n-1}(K_j\cap\pa B_{x,r,\e}^+)$, and thus, by \eqref{rhorho}, that
\[
 \lim_{j\rightarrow\infty} \H^n(A^+_j)=\H^n(\pa B^+_{x,\e,r})\,;
\]
similarly, $\H^n(A_j^-)\to\H^n(\pa B^-_{x,\e,r})$ if $A_j^-$ is the largest connected component of $\pa B_{x,r,\e}^-\setminus K_j$. We claim that, for $j$ sufficiently large, $A^+_j$ and $A^-_j$ cannot belong to the same connected component of $\pa B_{x,r}\setminus K_j$:
for otherwise, we can compare with the cup competitor of $K_j$ in $B_{x,r}$ defined by the connected component of $\pa B_{x,r}\setminus K_j$ containing $A^+_j\cup A^-_j$ (which is the largest connected component of $\pa B_{x,r}\setminus K_j$ when $j$ if large enough), obtaining
\begin{eqnarray*}
 \mu(B_{x,r})&\le&\liminf_{j\rightarrow\infty} \H^n(K_j\cap B_{x,r}) \leq
 \liminf_{j\rightarrow\infty} \H^n(\pa B_{x,r}\setminus (A^+_j\cup A^-_j))
 \\
 &\le&\H^n(\pa B_{x,r}\cap\{|y_{n+1}-x_{n+1}|<\e\,r\})\le C\e r^n,
\end{eqnarray*}
against the density lower bound \eqref{lower density estimate mu}. If we now fix $\eta$, then we can choose $\e$ so that
 Lemma \ref{lemma iso sfera} entails, for $j$ large enough,
\[
( \s_{n-1} -\eta)r^{n-1} \le \liminf_{j\rightarrow\infty} \H^{n-1}(K_j\cap \pa B_{x,r})\le f'(r)\,.
\]
In conclusion, $f'(r)\ge( \s_{n-1} -\eta)r^{n-1}$ for a.e. $r<\rho$. Inasmuch $f(r)\ge (\s_{n-1} - \eta)r^n/n$ for every $r<\rho$, one concludes that $\theta (x) \ge (\s_{n-1} - \eta)/(n\omega_n)$. Letting $\eta\rightarrow 0$ we obtain $\theta(x)\ge 1$.

To complete the proof of the theorem we recall that, a standard consequence of the monotonicity formula \eqref{monotonicity mu} is the upper semicontinuity of $\theta$: a simple density argument then shows \eqref{e:DLB} (cf. \cite[Corollary 17.8]{SimonLN}).
\end{proof}

\section{Proof of Theorem \ref{thm plateau}}

Most of the proof of Theorem \ref{thm plateau} relies on the following elementary geometric remark.

\begin{lemma}\label{lemma curve intersezione}
   If $K\in\F(H,\CC)$, $B_{x,r}\cc\R^{n+1}\setminus H$, and $\g\in\CC$,
   then either $\g\cap (K\setminus B_{x,r})\ne\emptyset$, or there exists a connected component $\s$ of
   $\g\cap \cl (B_{x,r})$ which is homeomorphic to an interval and whose end-points belong to two distinct
   connected components of $\cl(B_{x,r})\setminus K$ (and so to two distinct components of $\pa B_{x,r}\setminus K$). The same conclusion holds if
we replace $B_{x.r}$ with an open cube $Q\subset \mathbb R^{n+1}\setminus H$.
\end{lemma}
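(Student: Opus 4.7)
My plan is to argue by contradiction: assuming $\gamma\cap(K\setminus B_{x,r})=\emptyset$ and that no arc component of $\gamma\cap\cl(B_{x,r})$ has its endpoints in two distinct connected components of $\cl(B_{x,r})\setminus K$, I will produce a smooth embedded loop $\gamma'\in\CC$ disjoint from $K$, contradicting $K\in\F(H,\CC)$.

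Since $\gamma$ is a smooth embedding of $S^1$, each connected component of $\gamma\cap\cl(B_{x,r})$ is either all of $\gamma$, an isolated tangency point on $\pa B_{x,r}$, or a compact arc with both endpoints on $\pa B_{x,r}$. The hypothesis $\gamma\cap K\subset B_{x,r}$ forces the endpoints of any arc meeting $K$ to lie in $\pa B_{x,r}\setminus K$, and tangency components never meet $K$. In the degenerate case $\gamma\subset\cl(B_{x,r})$, $\gamma$ is nullhomotopic in $\R^{n+1}\setminus H$, so any small smooth embedded circle around a point of $\R^{n+1}\setminus(H\cup K)$ belongs to the same free homotopy class, lies in $\CC$ by homotopy-closedness, and misses $K$, yielding the contradiction already in this case.

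In the remaining case, for every arc component $\sigma$ of $\gamma\cap\cl(B_{x,r})$ meeting $K$, let $p,q\in\pa B_{x,r}\setminus K$ be its endpoints and $U_\sigma$ the connected component of $\cl(B_{x,r})\setminus K$ containing both. Because $\cl(B_{x,r})\setminus K$ is locally path-connected, $U_\sigma$ is path-connected, so I can pick a continuous arc $\tilde\sigma\subset U_\sigma$ from $p$ to $q$. Substituting each such $\sigma$ with its $\tilde\sigma$ yields a continuous loop $\tilde\gamma\subset\R^{n+1}\setminus(H\cup K)$; since $\cl(B_{x,r})$ is simply connected and each substitution is a rel-endpoint deformation inside the ball, $\tilde\gamma$ is freely homotopic to $\gamma$ in $\R^{n+1}\setminus H$.

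Finally, since $\tilde\gamma\cap(H\cup K)=\emptyset$ and $H\cup K$ is closed, a sufficiently thin open tubular neighborhood $V$ of $\tilde\gamma$ remains disjoint from $H\cup K$; standard smoothing together with a general-position argument (legitimate because $n+1\ge 3$) produces a smooth embedded loop $\gamma'\subset V$ in the free homotopy class of $\tilde\gamma$, hence of $\gamma$. Then $\gamma'\in\CC$ by homotopy-closedness while $\gamma'\cap K=\emptyset$, contradicting $K\in\F(H,\CC)$. The argument applies verbatim to an open cube $Q\subset\R^{n+1}\setminus H$ in place of $B_{x,r}$, since $Q$ is convex and hence simply connected. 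The main technical obstacle I expect is the simultaneous, globally continuous replacement of the (possibly infinitely many) arc components of $\gamma\cap\cl(B_{x,r})$ and the subsequent passage from the continuous loop $\tilde\gamma$ to a smooth embedded representative in its free homotopy class; both are standard but require bookkeeping.
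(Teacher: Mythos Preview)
Your strategy is the same as the paper's: assume no arc of $\gamma\cap\cl(B_{x,r})$ has endpoints in two distinct components of $\cl(B_{x,r})\setminus K$, replace each such arc by a path in a single component, and obtain (after smoothing and general position, using $n+1\ge 3$) an element of $\CC$ missing $K$. The difference is tactical. The paper first reduces to the case where $\gamma$ meets $\partial B_{x,r}$ transversally (via Sard's theorem and a family of diffeomorphisms $f_s$ mapping $\partial B_{x,s}$ to $\partial B_{x,r}$, then a limiting argument on the arc endpoints), so that there are only \emph{finitely many} arcs to replace; you instead work directly with $\gamma$ and flag the possibly infinite number of arcs as the main obstacle.

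In fact that obstacle dissolves, and your route is the shorter one. Only finitely many components of $\gamma\cap\cl(B_{x,r})$ can meet $K$: if $\sigma_n$ were infinitely many such components and $p_n\in\sigma_n\cap K$, then by compactness of $\gamma\cap K$ a subsequence converges to some $p\in\gamma\cap K$. Since $\gamma\cap(K\setminus B_{x,r})=\emptyset$ forces $p\in B_{x,r}$, the point $\gamma^{-1}(p)$ lies in the \emph{interior} of its component of $\gamma^{-1}(\cl(B_{x,r}))$ in $S^1$, and hence so do all nearby $\gamma^{-1}(p_n)$, contradicting that the $\sigma_n$ are distinct components. With this in hand your replacement is a finite procedure and the continuity/homotopy bookkeeping is immediate; the paper's transversality reduction and limit argument are not needed. (Your treatment of the degenerate case $\gamma\subset\cl(B_{x,r})$ is also fine: such a $\gamma$ would be null-homotopic in $\R^{n+1}\setminus H$, and then homotopy-closedness of $\CC$ together with $K\in\F(H,\CC)$ already gives a contradiction.)
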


\begin{proof}
  [Proof of Lemma \ref{lemma curve intersezione}] {\it Step one}: We first prove the lemma under the assumption that $\g$ and $\pa B_{x,r}$ intersect transversally. Indeed, if this is the case then we can find finitely many mutually disjoint closed circular arcs $I_i\subset S^1$, $I_i=[a_i,b_i]$, such that $\g\cap B_{x,r}=\bigcup_i\g((a_i,b_i))$ and $\g\cap\pa B_{x,r}=\bigcup_i\{\g(a_i),\g(b_i)\}$. Arguing by contradiction we may assume that for every $i$ there exists a connected component $A_i$ of $\cl (B_{x,r})\setminus K$ such that $\g(a_i),\g(b_i)\in A_i$. (Note that, possibly, $A_i=A_j$ for some $i\ne j$). By connectedness of $A_i$, for each $i$ we can find a smooth embedding $\tau_i:I_i\rightarrow A_i$ such that $\tau_i(a_i)=\g(a_i)$ and $\tau_i(b_i)=\g(b_i)$; moreover, one can easily achieve this by enforcing $\tau_i(I_i)\cap\tau_j(I_j)=\emptyset$. Finally, we define $\bar\g$ by setting $\bar\g=\g$ on $S^1\setminus\bigcup_iI_i$, and $\bar\g=\tau_i$ on $I_i$.  In this way, $[\bar\g]=[\g]$ in
$\pi_1(\R^{n+1}\setminus H)$, with $\bar\g\cap K\setminus\cl(B_{x,r})=\g\cap K\setminus\cl(B_{x,r})=\emptyset$ and $\bar\g\cap K\cap\cl(B_{x,r})=\emptyset$ by construction; that is, $\bar\g\cap K=\emptyset$. Since there exists $\widetilde\g\in\CC_H$ with $[\widetilde\g]=[\bar\g]=[\g]$ in $\pi_1(\R^{n+1}\setminus H)$ which is uniformly close to $\bar\g$, we entail $\widetilde\g\cap K=\emptyset$, and thus find a contradiction to $K\in\F(H,\CC)$.

  \medskip

  \noindent {\it Step two}: We prove the lemma for any ball $B_{x,r}\subset \mathbb R^{n+1}\setminus H$. Since $\g$ is a smooth embedding, by Sard's theorem we find that $\g$ and $\pa B_{x,s}$ intersect transversally for a.e. $s>0$. In particular, given $\e$ small enough, for any such $s\in(r-\e,r)$ we can construct a smooth diffeomorphism $f_s:\R^{n+1}\rightarrow \R^{n+1}$ such that $f_s=\Id$ on $\R^{n+1}\setminus B_{x,r+2\e}$ and $f_s(y)=x+(r/s)(y-x)$ for $y\in B_{x,r+\e}$, in such a way that
  \begin{equation}
    \label{quaranta}
    \mbox{$f_s\to \Id$ uniformly on $\R^{n+1}$ as $s\to r^-$}\,.
  \end{equation}
  We claim that one can apply step one to $f_s\circ\g$. Indeed, the facts that $f_s\circ\g\in\CC$ and $f_s\circ\g$ and $\pa B_{x,r}$ intersect transversally are straightforward;
  moreover, since $\dist(\g,K\cap\pa B_{x,r})>0$ and by \eqref{quaranta} one easily entails that $(f_s\circ\g)\cap K\setminus B_{x,r}=\emptyset$. Hence, by step one, there exists a proper circular arc $I=[a_s,b_s]\subset S^1$ such that $f_s(\g(a_s))\in A_{i(s)}$ and $f_s(\g(b_s))\in A_{j(s)}$ for $A_i\ne A_j$ connected components of $\cl (B_{x,r})\setminus K$ and $(f_s\circ\g)(a_s,b_s)\subset B_{x,r}$. Up to subsequences, we can assume that $a_s\to \bar{a}$, $b_s\to \bar{b}$ and the arc $[a_s, b_s]$ converges to $[\bar{a}, \bar{b}]$. It follows that $\gamma (\bar{a})$ and $\gamma (\bar{b})$ must be belong to distinct connected components of $\cl (B_{x,r})\setminus K$, otherwise by \eqref{quaranta} $f_s (\gamma (a_s))$ and $f_s (\gamma (b_s))$ would belong to the same connected component for some $s$ close enough to $r$.
By \eqref{quaranta} we also have $\gamma ([\bar{a}, \bar{b}])\subset \cl (B_{x,r})$.

The argument for cubes $Q$ is a routine modification of the one given above and left to the reader.
\end{proof}

\begin{proof}
  [Proof of Theorem \ref{thm plateau}] {\it Step one}: We start showing that $\F(H,\CC)$ is a good class in the sense of Definition \ref{def good class}. To this end, we fix $V\in\F (H,\CC)$ and $x\in V$, and prove that a.e. $r\in (0, \dist (x, H))$ one has $V',V''\in\F(H,\CC)$, where $V'$ is the cone competitor of $V$ in $B_{x,r}$, and $V''$ is a cup competitor of $V$ in $B_{x,r}$. We thus fix $\g\in\CC$ and, without loss of generality, we assume that $\g\cap(V\setminus B_{x,r})=\emptyset$. By Lemma \ref{lemma curve intersezione}, $\gamma$ has an arc contained in $\cl (B_{x,r})$ homeomorphic to $[0,1]$ and whose end-points belong to distinct connected components of $\pa B_{x,r}\setminus V$; we denote by $\sigma :[0,1]\rightarrow \cl(B_{x,r})$ a parametrization of this arc. By construction, either $\s(0)$ or $\s(1)$ must belong to $\g\cap V''\cap \pa B_{x,r}$. This proves that $V''\in\F(H,\CC)$. We now show that $\g\cap V'\cap \cl (B_{x,r})\ne\emptyset$. If $x\in\s$, then, trivially, $V'\cap \sigma\neq \emptyset$; if
$x\not\in\s$, then we can project $\sigma$ radially on $\partial B_{x,r}$, and such projection $\pi\circ\sigma$ must intersect $V'\cap\pa B_{x,r}=V\cap \partial B_{x,r}$ by connectedness. If $z$ is such an intersection point, then
$V'\supset \pi^{-1}(z)\cap \s([0,1])\ne\emptyset$
 as $\pi^{-1}(z)=\lambda z$ for some $\lambda\in(0,1)$. This proves that $V'\in\F(H,\CC)$.

  \medskip

  \noindent {\it Step two}: By step one, given a minimizing sequence $\{K_j\}\subset\F(H,\CC)$ which consists of rectifiable sets, we can find a set $K$ with the properties stated in Theorem \ref{thm generale}. In order to prove the second statement in (b) we just need to show that $K\in\F(H,\CC)$. Suppose by contradiction that some $\gamma\in\CC$ does not intersect $K$. Since both $\gamma$ and $K$ are compact, there exists a positive $\e$ such that the tubular neighborhood $U_{2\e}(\gamma)$ does not intersect $K$ and is contained in $\R^{n+1}\setminus H$. Hence
 $\mu(U_{2\e}(\gamma))=0$, and thus
 \begin{equation}\label{e:contraddice}
 \lim_{j\to\infty}\H^n(K_j\cap U_{\e}(\gamma))=0\,.
 \end{equation}
Observe that there is a diffeomorphism $\Phi: S^1 \times D_{\e} \to U_{\e}(\gamma)$ such that $\Phi|_{S^1\times \{0\}} = \gamma$, where
$D_\rho := \{y\in \mathbb R^n: |y|< \rho\}$. Denote by $\gamma_y$ the parallel curve
$\Phi|_{S^1 \times \{y\}}$. Then $\gamma_y \in [\gamma] \in \pi_1 (\R^{n+1}\setminus H)$ for every $y\in D_{\e}$. Thus we must have
 $K_j \cap (\gamma\times\{y\})\neq \emptyset$ for every $y\in D_{\e}$ and every $j\in \mathbb N$. If we set $\hat\pi:  S^1 \times D_{\e}
\to D_\e$ to be the projection on the second factor and define $\pi: U_{\e}(\gamma)\rightarrow D_\e$ as $\hat{\pi}\circ \Phi^{-1}$,
then $\pi$ is a Lipschitz map. The coarea formula then implies
  \[
  \H^n(K_j\cap U_{\e}(\gamma)) \ge \frac{\omega_n\,\e^n}{(\Lip(\pi))^n}>0\,,
  \]
which contradicts \eqref{e:contraddice}.  This shows that $K\in\F(H,\CC)$, as claimed.

  \medskip

  \noindent {\it Step three}: We show that $K$ is a $(\MM,0,\infty)$-minimal set, i.e.
  \[
  \H^n(K)\le \H^n(\vphi(K))
  \]
  whenever $\vphi:\R^{n+1}\to\R^{n+1}$ is a Lipschitz map such that $\vphi=\Id$ on $\R^{n+1}\setminus B_{x,r}$ and $\vphi(B_{x,r})\subset B_{x,r}$ for some $x\in\R^{n+1}\setminus H$ and $r<\dist(x,H)$. To this end, it suffices to show that given such a function $\vphi$, then $\vphi(K)\in\F (H,\CC)$. We fix $\g\in\CC$ and directly assume that  $\g\cap(K\setminus B_{x,\rho})=\emptyset$ for some $\rho\in(r,\dist(x,H))$. By Lemma \ref{lemma curve intersezione}, there exist two distinct connected components $A$ and $A'$ of $B_{x,\rho}\setminus K$ and a connected component of $\gamma\cap \cl (B_{x,\rho})$ having end-points $p\in\cl(A)\cap\pa B_{x,\rho}$ and $q\in\cl(A')\cap\pa B_{x,\rho}$.
  We complete the proof by showing that $p=\vphi(p)$ and $q=\vphi(q)$ are adherent to distinct connected components of $B_{x,\rho}\setminus\vphi(K)$.
  We argue by contradiction, and denote by $\Om$ the connected component of $B_{x,\rho}\setminus\vphi(K)$ with $p,q\in\cl(\Om)$. If $h$ denotes the restriction of $\vphi$ to $\cl(A)$, then the topological degree of $h$ is defined on $\R^{n+1}\setminus h(\partial A)$, thus in $\Omega$.
  Since $\vphi=\Id$ in a neighborhood of $\pa B_{x,\rho}$, one has $\deg(h,p')=1$ for every $p'$ sufficiently close to $p$; since the degree is locally constant and $\Omega$ is connected, $\deg(h,\cdot)=1$ on $\Omega$. In particular, for every $y\in\Omega$, $\vphi^{-1}(y)\cap A\ne\emptyset$. We apply this with $y=q'$ for some $q'\in\Omega$ sufficiently close to $q$. Let $w\in\vphi^{-1}(q')$: since $\vphi =\Id$ on $\R^{n+1}\setminus B_{x,r}$, if $|q'|>r$ then $w=q'$, and thus $q'\in A$. In other words, every $q'\in B_{x,\rho}$ sufficiently close to $q$ is contained in $A$. We may thus connect in $A$ any pair of points $p',q'\in B_{x,\rho}$ which are sufficiently close to $p$ and $q$ respectively, that is to say, $p$ and $q$ can be connected in $A$. This contradicts $A\ne A'$, and completes the proof of the fact that  $K$ is a $(\MM,0,\infty)$-minimal set. We are thus left to prove (b).

\medskip

\noindent {\it Step four}: We want to show that given $K\in\F(H,\CC)$ with $\H^n(K)<\infty$ there exists $K'\in\F(H,\CC)$  rectifiable such that $\H^n(K')\le\H^n(K)$. The proof is divided in three further steps.
By \cite[2.10.25]{FedererBOOK}, $0=(\om_1\om_n/\om_{n+1})\,\H^{n+1}(K)\ge\int_\R^*\H^n(K\cap\{x_1=t\})\,dt$, thus $\Le^1(\{t\in\R:\H^n(K\cap\{x_1=t\})>0\})=0$. In particular,
\[
\Le^1\Big(\bigcup_{j\in\N}\Big\{t\in(0,1):\H^n\big(K\cap \bigcup_{h\in\Z}\big\{x_1=t+\frac{h}{2^j}\big\}\big)>0\Big\}\Big)=0\,,
\]
so that, for a suitable $x_1^0\in(0,1)$ one has $\H^n(K\cap\{x_1=x_1^0+2^{-j}\,h\})=0$ for every $j\in\N$, $h\in\Z$. This argument can be repeated for each coordinate, so to reach a point $x^0\in \mathbb R^{n+1}$ such that $\H^n (K\cap \{x_m = x^0_m  + 2^{-j}\,h\})=0$ for every $m\in \{1, \ldots , n+1\}$, $j\in\N$, $h\in\Z$.
As a consequence, one finds a grid of open diadic cubes $\Q$ such that $\H^n(K\cap\pa Q)=0$ for every $Q\in\Q$. We let $\W$ be the Whitney's covering of $\R^{n+1}\setminus H$ obtained from $\Q$ as in \cite[Theorem 3, page 16]{steinbook}, so that
if $Q'$ is the concentric cube with twice the size of $Q\in\W$, then $Q'\cap H=\emptyset$.

\medskip

\noindent {\it Step five}: First, for every $Q\in\W$ we define a suitable replacement $K_Q$ in the cube $Q$ such that $K_Q\cap\cl(Q)$ is $\H^n$-rectifiable with $\H^n(K_Q\cap\cl(Q))\le\H^n(K\cap\cl(Q))$ and $K_Q\setminus \cl (Q) = K\setminus \cl (Q)$.
Let us denote by $\{F_i\}_i$ the family of connected components of $Q'\setminus K$ and consider the partitioning problem (into Caccioppoli sets, cf. for instance \cite[Section 4.4]{AFP})
\begin{equation}
  \label{partitioning problem}
  \inf\Big\{\H^n\Big(Q'\cap\bigcup_i\pa^* E_i\Big):\mbox{$\{E_i\}_i$ is a partition modulo $\H^{n+1}$ of $Q'$ with $E_i\setminus Q=F_i\setminus Q$}\Big\}\,.
\end{equation}
Since $F_i$ is open with $\pa F_i\subset K$ and $\H^n(K)<\infty$, the infimum in \eqref{partitioning problem} is finite and there exists a minimizing partition $\{E_i\}_i$ (one can apply, for instance, \cite[Theorem 4.19 \& Remark 4.20]{AFP}). Let the closed set $K_Q$ be given by
\[
K_Q=(K\setminus Q)\cup\Big(\cl(Q)\cap\cl\Big(\bigcup_i\pa^*E_i\Big)\Big)\,.
\]
By a slight modification of \cite[Lemma 30.2]{maggiBOOK}, $\H^n(Q\cap(K_Q\setminus\bigcup_i\pa^*E_i))=0$, so that $\cl(Q)\cap K_Q$ is countably $\H^n$-rectifiable. To prove $\H^n(K_Q\cap\cl(Q))\le\H^n(K\cap\cl(Q))$ it suffices to show
\[
\H^n\Big(\cl(Q)\cap\big(K_Q\setminus\bigcup_i\pa^*E_i\big)\Big)=0\,.
\]
Inasmuch $\H^n(Q\cap(K_Q\setminus\bigcup_i\pa^*E_i))=0$ and $\H^n(K\cap\pa Q)=0$, we just need to prove
\[
\H^n\Big(\pa Q\cap\big((K_Q\setminus K)\setminus\bigcup_i\pa^*E_i\big)\Big)=0\,.
\]
In turn, by \cite[Corollary 6.5]{maggiBOOK}, it is enough to find $c_0>0$ such that
\begin{equation}
  \label{brasil}
  \H^n\big(B_{x,r}\cap \bigcup_i\pa^*E_i\big)\ge c_0\,r^n\,,\qquad\forall x\in\pa Q\cap (K_Q\setminus K)\,,\forall  r<r_x=\dist(x,K\setminus Q)\,.
\end{equation}
We now prove \eqref{brasil}. Let $i_0$ be such that $x\in F_{i_0}$ and, for $r<r_x$, let $G_i=E_i\setminus B_{x,r}$ if $i\ne i_0$, and $G_{i_0}=E_{i_0}\cup B_{x,r}$. Since $\{G_i\}_i$ is admissible in \eqref{partitioning problem}, we find that
\begin{eqnarray*}
f(r):=\H^n\big(\cl(B_{x,r})\cap\bigcup_{i}\pa^*E_i\big)\le \H^n\big(\cl(B_{x,r})\cap\bigcup_{i}\pa^*G_i\big)
=\H^n\big(\pa B_{x,r}\cap\bigcup_{i}\pa^*G_i\big)\,.
\end{eqnarray*}
We next denote by $E_i^{(\tau)}$ the points of $x$ of density $\tau$ of the set $E_i$:
\[
\lim_{r\to 0} \frac{\mathcal{H}^{n+1} (B_{x,r}\cap E_i)}{\omega_{n+1} r^{n+1}} = \tau\, .
\]
Now, for a.e. $r<r_x$, one has $\H^n(\pa B_{x,r}\cap (E_{i_0}^{(0)}\Delta \pa^*G_{i_0}))=0$, as well as
\[
\H^n\big(\pa B_{x,r}\cap (E_i^{(1)}\Delta \pa^*G_i)\big)=0\,,\qquad\forall i\ne i_0\,,\qquad
\H^n\Big(\pa B_{x,r}\cap\big(E_{i_0}^{(0)}\Delta\bigcup_{i\ne i_0}E_i^{(1)}\big)\Big)=0\,.
\]
We thus find that  $f(r)\le \H^n(\pa B_{x,r}\cap E_{i_0}^{(0)})$ for a.e. $r<r_x$; now, again for a.e. $r<r_x$, the set $\pa B_{x,r}\cap E_{i_0}^{(0)}$ has finite perimeter in $\pa B_{x,r}$ with
\[
\H^{n-1}\Big(\pa^*_{\pa B_{x,r}}\big(\pa B_{x,r}\cap E_{i_0}^{(0)}\big)\Delta \big(\pa B_{x,r}\cap\pa^*E_{i_0}\big)\Big)=0\,;
\]
since $\H^n(\pa B_{x,r}\setminus E_{i_0}^{(0)})\ge\H^n(\pa B_{x,r})/2$ by convexity of $Q$, the isoperimetric inequality on $\pa B_{x,r}$ yields $f(r)\le C(n)\H^{n-1}(\pa^*E_{i_0}\cap\pa B_{x,r})^{n/(n-1)}\le C(n)\,f'(r)$ for a.e. $r<r_x$. By arguing as in step one of the proof of Theorem \ref{thm generale}, we complete the proof of \eqref{brasil}.

\medskip

\noindent {\it Step six}: We finally set $K'=\bigcup_{Q\in\W}K_Q\cap\cl(Q)$. By step two, $K'$ is $\H^n$-rectifiable, with
\[
\H^n(K')\le\sum_{Q\in\W}\H^n(K_Q\cap\cl(Q))\le\sum_{Q\in\W}\H^n(K\cap\cl(Q))=\sum_{Q\in\W}\H^n(K\cap Q)\,,
\]
where in the last identity we have used step four. This shows that $\H^n(K')\le\H^n(K)$. We now prove that $K'\in\F(H,\CC)$. Let $\g\in\CC$, so that $\g\cap K\cap\cl(Q)\ne \emptyset$ for some $Q\in\W$. Since $K\cap\pa Q\subset K_Q\cap\pa Q\subset K'\cap\pa Q$, we may directly assume that $\g\cap K\cap Q\ne\emptyset$. By Lemma \ref{lemma curve intersezione}, there exists a connected component $\s$ of $\g\cap\cl(Q)$ with end-points $p\in F_i\cap\pa Q$ and $q\in F_j\cap\pa Q$ for some for some distinct connected components $F_i$ and $F_j$ of $\cl (Q)\setminus K$. If either $p$ or $q$ belongs to $K_Q$ there is nothing to prove; otherwise, $p\in E_i$ and $q\in E_j$. In particular, by connectedness of $\s$, it must be $\s\cap K_Q\cap \cl Q\ne\emptyset$. This completes the proof of (b).
\end{proof}

\section{Proof of Theorem \ref{thm david}}\label{appendix david}

\begin{proof} {\it Step one}: In this and in the next step we prove that $\A (H, K_0)$ is a good class in the sense of Definition \ref{def good class}. Let $K\in\A(H,K_0)$: in this step show \eqref{inf good class} when $L$ is a cup competitor in $B_{x,r}\subset \mathbb R^{n+1}\setminus H$ (at least for a.e. $r$). W.l.o.g. we assume $x=0$ and to simplify the notation we write $B_r$ rather than $B_{0,r}$.
We consider therefore $B_r \cc\R^{n+1}\setminus H$ and assume further $\H^n(K\cap\pa B_r )=0$, which holds for a.e. $r$. Also, for convenience we can rescale and assume $r=1$: we then write $B$ instead of $B_1$. Consider the cup competitor of $K$ in $B$ defined by a given connected component $A$ of $\pa B\setminus K$. Its Hausdorff measure is $\H^n (K\setminus B) + \H^n (\partial B \setminus A)$. Our goal is thus to show that, for any given $\sigma >0$, there is $J\in \mathcal{A} (H, K_0)$ with the property that $J\setminus \cl (B) = K\setminus \cl (B)$ and
$\H^n (J) \leq \H^n (K\setminus B) + \H^n (\partial B \setminus A) + \sigma$, namely
\begin{equation}
  \label{inf good class 2}
  \H^n (J\cap \cl (B)) \leq \H^n (\partial B \setminus A) + \sigma\,
\end{equation}
By definition we need a map $\phi_3\in \Sigma (H)$ such that $J = \phi_3 (K)$. In fact we will build $\phi_3$ so that
$\phi_3 = {\rm Id}$ on $\R^{n+1}\setminus B_{1+\eta}$ for some sufficiently small $\eta$.

$\phi_3$ will be constructed building upon two additional maps $\phi_1$ and $\phi_2$. To construct $\phi_1$ we just fix $x_0\in A$ and a small $\rho$ so that $B_{x_0, \rho} \cap K = \emptyset$. $\phi_1$ then projects $B\setminus B_{x_0, \rho}$ onto $\pa B$ along the rays emanating from $x_0$, while it ``stretches'' $B_{x_0, \rho}\cap \cl (B)$ onto $\cl (B)$. In doing so, we achieve that $K_1=\phi_1(K\cap \cl (B))$ is contained in $\partial B$ and it is disjoint from $B_{x_0,\rho}$.

We next claim the existence of a Lipschitz map $\phi_2:\pa B\to\pa B$ with the property that $\phi_2=\Id$ on $U_\e(K\cap\pa B)$ for some
positive $\e$ and that
\begin{equation}
  \label{phi2}
  \H^n(\phi_2(K_1))\le\H^n(\pa B\setminus A)+\s\,.
\end{equation}
The existence of the map $\phi_2$ will be shown in a moment.

In correspondence of $\e$ we can find $\eta>0$ such that $ B_{1+\eta}\cc\R^{n+1}\setminus H$ and
\[
\frac{K\cap\pa B_{1+t}}{1+t}\subset U_\e(K\cap\pa B)\,,\qquad\forall t\in(0,\eta)\,.
\]
Finally, we define $\phi_3:\R^{n+1}\to\R^{n+1}$ by setting
\[
\phi_3(x)=\left\{
\begin{array}{l l}
 \phi_2\big(\phi_1(x))\,,&\mbox{for $|x|<1$}\,,
 \\
 \frac{|x|-1}\eta\,x+\frac{1+\eta-|x|}\eta\,\phi_2\big(\phi_1(x)\big)\,,&\mbox{for $1\le |x|<1+\eta$}
 \\
 x\,,&\mbox{for $|x|\ge 1+\eta$}\,.
\end{array}
\right .
\]
Notice that $\phi_3$ is a Lipschitz map, with
\[
\phi_3=\Id\quad\mbox{on}\quad (\R^{n+1}\setminus B_{1+\eta})\cup \Big\{(1+t)\,x:t\in(0,\eta)\,x\in U_\e(K\cap\pa B)\Big\}\,.
\]
In particular, $J \setminus \cl (B) = \phi_3(K \setminus \cl (B))=K\setminus \cl (B)$ and $J \cap \cl (B) = \phi_3(K\cap \cl (B))=\phi_2(K_1)$ and, by \eqref{phi2}, \eqref{inf good class 2} holds,

We are thus left to construct the map $\phi_2$.  Up to conjugation with a stereographic projection with pole $x_0$, the existence of $\phi_2$ is reduced to the following problem. Given
\begin{itemize}
\item[(i)] a connected open set $\Omega\subset\R^n$ whose complement is bounded and with $\H^n(\pa\Om)=0$,
\item[(ii)] a ball $B_R\subset \mathbb R^n$ such that $\partial \Omega \subset\subset B_R$
\item[(iii)] and a $\s>0$,
\end{itemize}
find $\e>0$ and a Lipschitz map $\phi:\R^n\to\R^n$ such that
\begin{itemize}
\item[(a)] $\phi=\Id$ on $U_\e(\pa\Om)\cup(\R^n\setminus\Om) \cup \R^n\setminus B_{2R}$
\item[(b)] and $\H^n(\phi(B_{R}\cap \Omega))<\s$.
\end{itemize}
This can be achieved as follows. Let $\W$ be the Whitney decomposition of $B_{2R}\cap \Omega$, constructed from the standard family of diadic cubes in $\mathbb R^n$. Given $\e>0$ we can find a ``face connected'' finite subfamily $\W_0$ of $\W$ such that
\[
(B_R \cap \Om) \setminus U_\e(\pa\Om)\subset \bigcup_{Q\in\W_0}Q\,,
\]
and for which there exists $Q_0\in\W_0$ with $Q_0\setminus B_R\ne\emptyset$. We now construct a Lipschitz map $f:\R^{n+1}\to\R^{n+1}$ such that $f=\Id$ on $\R^{n+1}\setminus \bigcup_{Q\in\W_0}Q$ with
\[
f\Big(\bigcup_{Q\in\W_0}Q \cap B_R\Big)\subset\bigcup_{Q\in\W_0}\pa Q\,.
\]
To this end we choose a ball $U_0\cc Q_0\setminus B_R$, and then define a Lipschitz map $f_0:\R^n\to\R^n$ with $f_0=\Id$ on $\R^n\setminus Q_0$, $f_0(U_0)=Q_0$ and $f_0(Q_0\setminus U_0)=\pa Q_0$ by projecting $Q_0\setminus U_0$ radially from the center of $U_0$ onto $\pa Q_0$, and then by
stretching $U_0$ onto $Q_0$.
Let now $Q_1\in \W_0$ share a hyperface with $Q_0$, so that the side-length of $Q_1$ is at most twice that of $Q_0$. In case the side of $Q_1$ is twice that of $Q_0$, we subdivide $Q_1$ into $2^n$-subcubes and denote by $\hat Q_1$ the one sharing an hyperface with $Q_0$; otherwise we set $\hat Q_1=Q_1$. Let $x_1\in Q_0$ be the reflection of the center of $\hat Q_1$ with respect to the common hyperface between $Q_0$ and $\hat Q_1$. Then we can find a ball $U_1\cc Q_0$ and define a Lipschitz map $\hat f_1:\R^n\to\R^n$ such that $\hat f_1=\Id$ on $\R^n\setminus (Q_0\cup \hat Q_1)$, $\hat f_1((\hat Q_1\cup Q_0)\setminus U_1)\subset\pa(Q_0\cup \hat Q_1)$ and $\hat f_1(
U_1)=\hat Q_1\cup Q_0$. In the case when $\hat Q_1\ne Q_1$ we perform a further radial projection onto $\partial Q_1$ from a small ball centered on the center of $\hat Q_1$. In this way we have constructed a Lipschitz map $f_1:\R^n\to\R^n$ such that $f_1=\Id$ on $\R^n\setminus (Q_0\cup Q_1)$, $f_1((Q_1\cup Q_0)\setminus U_1)\subset\pa Q_0\cup \pa Q_1$ and $f_1(U_1)=Q_1\cup Q_0$. Thus $g_1=f_1\circ f_0$ is a Lipschitz map such that $g_1=\Id$ on $\R^n\setminus (Q_0\cup Q_1)$ and $g_1((Q_0\cup Q_1)\setminus U_0)\subset\pa Q_0\cup\pa Q_1$. A simple iteration concludes the proof.

\medskip

\noindent {\it Step two}: In this step we address cone competitors. As before we consider balls $B_r$ centered at $0$ with $B_r\cc\R^{n+1}\setminus H$. We assume in addition that $K\cap\pa B_r$ is $\H^{n-1}$-rectifiable with $\H^{n-1}(K\cap\pa B_r)<\infty$ and that $r$ is a Lebesgue point of $t\in(0,\infty)\mapsto \H^{n-1}(K\cap\pa B_t)$. All these conditions are fullfilled for a.e. $r$ and again by scaling we can assume that $r=1$ and use $B$ instead of $B_1$. Let $K'$ denote the cone competitor of $K$ in $B$. For $s\in(0,1)$ let us set
\[
\vphi_s(r)=\left\{
\begin{array}{l l}
  0\,,&r\in[0,1-s)\,,
  \\
  \frac{r-(1-s)}s\,,&r\in[1-s,1]\,,
  \\
  r\,,&r\ge 1\,,
\end{array}
\right .
\]
and $\phi_s(x)=\vphi_s(|x|)$ for $x\in\R^{n+1}$. In this way $\phi_s:\R^{n+1}\to\R^{n+1}$ is a Lipschitz map with $\phi_s=\Id$ on $\R^{n+1}\setminus B$. In particular, $\phi_s(K)\setminus B=K\setminus B$ and thus we only need to show that
\[
\limsup_{s\to 0^+}\H^n(\phi_s(K\cap B))\le\H^n(K'\cap B)\,.
\]
Since $\phi_s(K\cap B_{1-s})=\{0\}$, we just have to show that
\[
\limsup_{s\to 0^+}\H^n\big(\phi_s(K)\cap (B\setminus B_{1-s})\big)\le\frac{\H^{n-1}(K\cap\pa B)}n\,.
\]
Denoting by $J^K\phi_s$ the tangential Jacobian of $\phi_s$ with respect to $K$, and letting $I$ be the (at most countable) set of those $t\in(0,1)$ such that $\H^{n-1}(K\cap\pa B_t)>0$, we find
\begin{eqnarray}\nonumber
  &&\H^n\big(\phi_s(K)\cap (B\setminus B_{1-s})\big)=\int_{K\cap(B\setminus B_{1-s})}J^K\phi_s\,d\H^n
  \\\label{ignoriamo}
  &=&\int_{1-s}^1\,dt\int_{K\cap\pa B_t}\frac{J^K\phi_s}{\sqrt{1-(\nu\cdot\hat x)^2}}\,d\H^{n-1}+ \sum_{t\in I\cap(1-s,1)}\left(\textstyle{\frac{t- (1-s)}{1-s}}\right)^n\H^n( K\cap\pa B_t),
\end{eqnarray}
where $\nu(x)\in S^{n+1}\cap (T_xK)^\perp$ for $\H^n$-a.e. $x\in K$ and $\hat x=x/|x|$. We first notice that, for $t\in (1-s,1)$, $\frac{t-(1-s)}{s} \leq 1$. Moreover
\[
\lim_{s\to 0}\sum_{t\in I\cap(1-s,1)}\H^n(K\cap\pa B_t)=0\,,
\]
and thus the second term in \eqref{ignoriamo} can be ignored. At the same time, for a constant $C$,
\[
J^K\phi_s(x)\le C+\sqrt{1-(\hat x\cdot\nu)^2}\,\vphi_s'(|x|)\,\Big(\frac{\vphi_s(|x|)}{|x|}\Big)^{n-1}\,,\qquad\mbox{for $\H^n$-a.e. $x\in K$}\,.
\]
The constant $C$ gives a negligible contribution in the integral as $s\downarrow 0$; as for the second term, having $\vphi_s'=1/s$ on $(1-s,1)$, we find
\[
\int_{1-s}^1 \H^{n-1}(K\cap\pa B_t)\,\vphi_s'(t)\,\Big(\frac{\vphi_s(t)}{t}\Big)^{n-1}\,dt
=\frac1s\int_{1-s}^1 \H^{n-1}(K\cap\pa B_t)\,\Big(\frac{\vphi_s(t)}{t}\Big)^{n-1}\,dt\,.
\]
Since $t=1$ is a Lebesgue point of $t\in(0,\infty)\mapsto \H^{n-1}(K\cap\pa B_t)$, we have
\[
\lim_{s\to 0}\frac1s\int_{1-s}^1 |\H^{n-1}(K\cap\pa B_t)-\H^{n-1}(K\cap\pa B)|\,dt=0\,,
\]
so that, combining the above remarks we find
\[
\limsup_{s\to 0^+}\H^n(\phi_s(K\cap B))\le \H^{n-1}(K\cap\pa B)\,
\limsup_{s\to 0^+}\frac1s\int_{1-s}^1 \Big(\frac{\vphi_s(t)}{t}\Big)^{n-1}\,dt=\frac{\H^{n-1}(K\cap\pa B)}n\,,
\]
as required. This completes the proof that $\mathcal{A} (H, K_0)$ is a good variational class.

\medskip

\noindent {\it Step three}: Having proved the first statement of the theorem, we now show the rest. Under the rectifiability assumption on $K_0$, any minimizing sequence in $\A (H, K_0)$ consists of rectifiable sets and we can therefore apply
Theorem \ref{thm generale}. We thus know that $\H^n\res K_j\weak\mu=\theta\,\H^n\res K$, where $K$ is countably $\H^n$-rectifiable and $\theta\ge 1$. Moreover we assume that $\e_j\downarrow 0$ quantifies the almost minimality of $K_j$, namely $\inf \{\H^n (J) : J\in \A (H, K_0)\} \geq \H^n (K_j) - \e_j$.

In this step we prove that $\theta\le 1$ $\mu$-a.e.. Arguing by contradiction we assume that $\theta(x)=1+\sigma>1$ for some $x$ where $K$ admits an approximate tangent plane $T$ (cf. Step 3 in the Proof of Theorem \ref{thm generale}). W.l.o.g. we can assume $x=0$ and $T = \{y: y_{n+1} =0\}$. By \eqref{e:DLB},
we can find $r_0>0$ such that
  \begin{equation}
    \label{mancoilnome}
      K\cap B_r\subset S_{\e r}\,,\qquad 1+\sigma\le\frac{\mu(\cl(B_r))}{\om_n\,r^n}\le1+\s+ \e\,\s\,, \qquad \forall r<r_0\, ,
  \end{equation}
  where $S_{\e r}=B_r\cap \{|x_{n+1}|<\e r\}$. If we fix any $r<r_0$ we then find $j_0 = j_0 (r)\in\N$ such that
  \begin{equation}
    \label{buonpunto}
      \H^n(K_j\cap B_r)>\Big(1+\frac{\s}2\Big)\,\om_n\,r^n\,,\qquad \H^n( (K_j\cap B_r)\setminus S_{\e r})<\frac\s{4}\,\om_n\,r^n,\qquad\forall j\ge j_0\,,
  \end{equation}
  and thus
  \[
  \H^n(K_j\cap S_{\e r})> \Big(1+\frac{\s}4\Big)\,\om_n\,r^n\,,\qquad\forall j\ge j_0\,.
  \]
  Let us set
  \[
  X_{\e r}=\Big\{x=(x',x_{n+1})\in S_{\e r}:|x'|<(1-\sqrt{\e})\,r\Big\}\,,
  \]
  and define $f:X_{\e r}\cup\ (\R^{n+1}\setminus B_r)\to\R^{n+1}$ with $f(x)=(x',0)$ if $x\in X_{\e r}$ and $f(x) =x$ otherwise. In this way $\Lip (f) \le 1+C\,\sqrt\e$
  and thus by Kirszbraun's theorem (see \cite[2.10.43]{FedererBOOK}) there exists a Lipschitz extension $\hat f:\R^{n+1}\to\R^{n+1}$ with $\Lip\hat f\le 1+C\,\sqrt\e$. Such extension belongs to $\Sigma (H)$ and we thus find
  \begin{eqnarray*}
    \H^n(K_j\cap B_r)-\e_j\le
    \underbrace{\H^n(\hat f(K_j\cap X_{\e r}))}_{I}
    +
    \underbrace{\H^n(\hat f(K_j\cap(S_{\e r}\setminus X_{\e r})))}_{II}
    +
    \underbrace{\H^n(\hat f(K_j\cap (B_r\setminus S_{\e r})))}_{III}\,.
  \end{eqnarray*}
  By construction, $I\le\om_n\,r^n$, while, by \eqref{buonpunto}, $\H^n(K_j\cap B_r)>(1+(\s/2))\om_n\,r^n$ and
  \[
  III\le(\Lip\hat f)^n\,\H^n(K_j\cap (B_r\setminus S_{\e r}))<(1+C\,\sqrt\e)^n\,\frac{\s}{4}\,\om_n\,r^n\,.
  \]
  Hence, as $j\to\infty$,
  \[
  \Big(1+\frac{\s}2\Big)\om_n\,r^n\le\om_n\,r^n+\liminf_{j\to\infty}II+(1+C\,\sqrt\e)^n\,\frac{\s}{4}\,\om_n\,r^n\,,
  \]
  that is,
  \begin{equation}\label{this}
  \Big(\frac12-\frac{(1+C\,\sqrt\e)^n}4\Big)\,\s\le\liminf_{j\to\infty}\frac{II}{\om_n\,r^n}\,.
  \end{equation}
  By \eqref{mancoilnome} and again by the monotonicity of $s^{-n}\,\mu(B_s)$, we finally estimate that
  \begin{eqnarray}\nonumber
    \limsup_{j\to\infty}\,II&\le&(1+C\sqrt{\e})^n\,\mu(\cl(B_r)\setminus B_{(1-\sqrt\e)r})
    \\\label{that}
    &\le&(1+C\sqrt{\e})^n\Big((1+\s+\e\s)-(1+\s)(1-\sqrt\e)^n\Big)\,\om_n\,r^n
  \end{eqnarray}
 However, since $\sigma >0$, \eqref{this} and \eqref{that} are not compatible when $\e$ is sufficiently small.

  \medskip

  \noindent {\it Step four}: We show that $\H^n(K_j)\to \H^n(K)$ and thus the first equality in \eqref{allardo}. We first let $R_0>0$ be such that $H\subset B_{R_0}$ and consider the Lipschitz map $\varphi (x) := \min\{|x|, R_0\} x/|x|$. Obviously $\varphi \in \Sigma (H)$
and we easily compute
\[
\H^n (K_j) - \e \leq \H^n (\varphi (K_j)) \leq \H^n (K_j \cap B_{2R_0}) + \frac{1}{2^n} \H^n (K_j \setminus B_{2R_0})\, .
\]
This implies that $\H^n (K_j \setminus B_{2R_0}) \to 0$.
In order to prove $\H^n(K_j)\to \H^n(K)$, we are left to show that there is no loss of mass at $H$. To this end, let us fix $\eta>0$, and consider $\de>0$ and the map $\pi$ as in \eqref{retraction}. Then, by $\pi\in\S(H)$ and by $\H^n(\pi(U_\de(H)))\le \H^n(H)=0$,
  \begin{eqnarray*}
  \H^n(K)&\le&\limsup_{j\to\infty}\H^n(K_j)\le\limsup_{j\to\infty}\H^n(\pi(K_j))\le (1+\eta)^n\,\limsup_{j\to\infty}\H^n(K_j\setminus U_\de(H))
  \\
  &=&(1+\eta)^n\,\limsup_{j\to\infty}\H^n((K_j\cap \cl(B_{2 R_0}))\setminus U_\de(H))
  \\
  &\le&(1+\eta)^n\,\H^n(K\cap\cl(B_{2 R_0}))\le(1+\eta)^n\,\H^n(K)\,.
  \end{eqnarray*}
The arbitrariness of $\eta$ implies that $\limsup_j \H^n (K_j)  = \H^n (K)$.

  \medskip

 \noindent {\it Step five}: To complete the proof we need to show the second equality in \eqref{allardo}. We argue in two steps, where we borrow some important ideas from \cite{depauwhardt}. We show in this step that $\H^n(K)\le\H^n(\phi(K))$ whenever $\phi\in\S(H)$ is a diffeomorphism. Let $G(n)$ denote the Grassmanian of $n$-planes in $\R^{n+1}$, let ${\rm d}(\tau,\s)$ denote the geodesic distance on $G(n)$, and let $J^\tau\phi$ be the tangential jacobian of $\phi$ with respect to $\tau\in G(n)$. Given $\e>0$ we can find $\de>0$ and a compact set $\hat{K} \subset K$ with $\H^n(K\setminus \hat{K} )<\e$ such that $K$ admits an approximate tangent plane $\tau(x)$ at every $x\in\hat K$,
 \begin{eqnarray}
   \label{approx cont}
    \sup_{x\in \hat{K}}\sup_{y\in B_{x,\de}}|\nabla\phi(x)-\nabla\phi(y)|\le \e\,,\qquad
 \sup_{x\in \hat{K}}\sup_{y\in \hat{K}\cap B_{x,\de}}{\rm d}(\tau(x),\tau(y))<\e\,,
 \end{eqnarray}
 and, moreover, denoting by $S_{x,r}$ the set of points in $B_{x,r}$ at distance at most $\e\,r$ from $x+\tau(x)$, then $K\cap B_{x,r}\subset S_{x,r}$ for every $r<\de$ and $x\in\hat{K}$. By Besicovitch covering theorem we can find a finite disjoint family of closed balls $\{\cl(B_i)\}$ with $B_i=B_{x_i,r_i}\cc\R^{n+1}\setminus H$, $x_i\in \hat{K} $, and $r_i<\de$, such that $\H^n(\hat{K} \setminus\bigcup_iB_i)<\e$. By exploiting the construction of step three, we can find $j(\e)\in\N$ and maps $f_i: \cl (B_i)\to \cl (B_i)$ with $\Lip(f_i)\le 1+C\,\sqrt{\e}$ such that, for a certain $X_i\subset S_i=S_{x_i,\e\,r_i}$,
 \begin{eqnarray}
   \label{ober1}
   &&f_i(X_i)\subset B_i\cap(x_i+\tau(x_i))\,,
   \\
   \label{ober2}
   &&\H^n\Big(f_i\big((K_j\cap B_i)\setminus X_i\big)\Big)\le C\,\sqrt\e\,\om_n\,r_i^n\,,\qquad\forall j\ge j(\e)\,.
 \end{eqnarray}
 By \eqref{approx cont}, \eqref{ober1}, by the area formula, by  $\om_n\,r_i^n\le\H^n(K\cap B_i)$ (thanks to the monotonicity formula), and setting $\a_i=\H^n((K\setminus \hat K)\cap B_i)$,
 \begin{eqnarray}\nonumber
   \H^n(\phi(f_i(K_j\cap X_i)))&=&\int_{f_i(K_j\cap X_i)}J^{\tau(x_i)}\phi(x)\,d\H^n(x)\le(J^{\tau(x_i)}\phi(x_i)+\e)\,\om_n\,r_i^n
   \\\nonumber
   &\le&(J^{\tau(x_i)}\phi(x_i)+\e)\,\H^n(K\cap B_i)
   \le(J^{\tau(x_i)}\phi(x_i)+\e)\,(\H^n(\hat K\cap B_i)+\a_i)
   \\ \nonumber
   &\le&\int_{\hat K\cap B_i}(J^{\tau(x)}\phi(x)+2\e)\,\,d\H^n(x)+((\Lip\phi)^n+\e)\,\a_i
   \\\label{diavolo}
   &=&\H^n(\phi(\hat K\cap B_i))+2\e\,\H^n(K\cap B_i)+((\Lip\phi)^n+\e)\,\a_i\,,
 \end{eqnarray}
 where in the last identity we have used the injectivity of $\phi$. Recalling step three, each map $f_i$ is the identity on $\partial B_i$ Since $\{\cl(B_i)\}$ is a finite disjoint family of closed balls, we can define $f:\R^n\to\R^n$ imposing $f= f_i$ on each $B_i$ and $f=\Id$ on $\R^n\setminus\bigcup_iB_i$. Obviously $f\in\S(H)$. Combining \eqref{ober2} with  $\om_n\,r_i^n\le\H^n(K\cap B_i)$, adding up over $i$, and letting $j\to\infty$ we thus find $\H^n(K_j)-\e_j\le\H^n(\phi(f(K_j)))\le\H^n(\phi(\hat K))+\varrho(\e)$ for every $j\ge j(\e)$, where $\varrho(\e)\to 0$ as $\e\to 0^+$ in a way which depends on $n$, $\Lip\phi$, and $\H^n(K)$ only. We first let $j\to\infty$ and then $\e\to 0$ to prove our claim.

\medskip

\noindent {\it Step six}: By step five, the canonical density one varifold associated to the rectifiable set $K$ turns out to be stationary in $\mathbb R^{n+1}\setminus H$. By Allard's regularity theorem \cite[Chapter 5]{SimonLN} there exists an $\H^n$-negligible closed set $S\subset K$ such that $\Gamma = K \setminus S$ is a real analytic hypersurface. We may now exploit this fact to improve on step five and show that $\H^n(K)\le\H^n(\phi(K))$ for every $\phi\in\S(H)$, showing that $K$ is a sliding minimizer (and hence an $(\MM,0,\infty)$-minimal set). The idea is that, by regularity of $\Gamma$, at a fixed distance from the singular set one can project $K_j$ directly onto $K$, rather than onto its affine tangent planes localized in balls. More precisely, since $\H^n(H\cup S)=0$ and $\H^n(K)<\infty$ one has
  \begin{equation}
    \label{usoquesta}
      \limsup_{j\to\infty}\H^n(K_j\cap U_\de(H\cup S))\le \H^n(K\cap U_\de(H\cup S))=:\varrho(\de)\,,
  \end{equation}
  where $\varrho(\de)\to 0$ as $\de\to 0^+$. If $N_\e(A)$ denotes the normal $\e$-neighborhood upon $A\subset\Gamma$, then, by compactness of $\Gamma_\de=\Gamma\setminus U_\de(H\cup S)$ there exists $\e<\de$ such that projection onto $\Gamma$ defines a smooth map $p:N_{2\e}(\Gamma_\de)\to \Gamma_\de$. We now define a Lipschitz map
\[
f_{\e, \delta} :N_\e(\Gamma_\de)\cup U_{\de/2}(H\cup S) \cup (\R^{n+1}\setminus U_\delta (\Gamma)) \to \R^{n+1}
\]
by setting $f_{\e, \delta}=p$ on $N_\e(\Gamma_\de)$, and $f_{\e, \delta} =\Id$ on the remainder. Observe that
\[
\lim_{\e\downarrow 0} \Lip (f_{\e, \delta}) = 1 < \infty\, .
\]
For every $\delta$ we then choose $\e < \delta$ so that $f = f_{\e, \delta}$ has Lipschitz constant at most $2$ and extend it to a Lipschitz map $\hat{f}$ on $\R^{n+1}$ with the same Lipschitz constant. Obviously $\hat f$ belongs to $\S (H)$. We can then estimate
  \begin{eqnarray}\label{pino}
  \H^n (\hat f(K_j)\setminus \Gamma_\delta)
  \le\,(\Lip\hat f)^n\,\H^n\big(K_j\setminus N_\e (\Gamma_\de)\big)\,.
  \end{eqnarray}
Observe that $\R^{n+1} \setminus N_{\e} (\Gamma_\de)\subset\subset \mathbb R^{n+1}\setminus U_{\e/2} (K) \cup U_{2\de} (H\cup S)$ and thus
  \begin{equation}
    \label{trombone}
      \limsup_{j\to\infty}
    \H^n\big(K_j\setminus N_\e(\Gamma_\de)\big)\le \H^n (K\cap U_{2\de} (H\cap S))\stackrel{\eqref{usoquesta}}{\leq} \varrho (2\de)\, .
  \end{equation}
Combining \eqref{pino} and \eqref{trombone}
\[
\limsup_j \H^n (\hat f (K_j)\setminus \Gamma_\delta)\leq 2^n \varrho (2\de)\, .
\]
On the other hand $\Gamma_\de\subset K$. Thus,
combining \eqref{pino} and \eqref{trombone} with a standard diagonal argument we achieve a sequence of maps $f_j\in \Sigma (H)$ such that $\H^n(f_j (K_j))\setminus K)\to 0$. Since each $K_j$ equals $\psi_j (K_0)$ for some $\psi_j \in \Sigma (H)$, we therefore conclude the existence of a sequence of maps $\{\vphi_j\}\subset\S(H)$ such that $\H^n(\vphi_j(K_0)\setminus K)\to 0$.

We are now ready to show the right identity in \eqref{allardo}. Fix $\phi\in \Sigma (H)$. Then
\begin{align*}
\H^n (\phi (K)) &\geq \liminf_{j\to \infty} \H^n (\phi \circ \varphi_j (K_0))\\
&\geq  \inf \big\{\H^n (J) : J\in \A (H, K_0)\big\} =\H^n(K)\, .
\end{align*}
This shows that $K$ is a sliding minimizer.
\end{proof}

\nocite{Harrison2014}
\nocite{HarrisonPugh12}
\nocite{HarrisonStokes93}

\bibliography{references}
\bibliographystyle{is-alpha}
\end{document}